\title{\vspace{-35pt}The Johnson homomorphism and its kernel}
\author{Andrew Putman\footnote{Supported in part by NSF grant DMS-1005318}}
\date{}
\theoremstyle{plain}
\newtheorem{theorem}{Theorem}[section]
\newtheorem{maintheorem}{Theorem}
\newtheorem{lemma}[theorem]{Lemma}
\newtheorem{corollary}[theorem]{Corollary}
\newtheorem{claims}{Claim}
\newtheorem{stepa}{Step}
\newtheorem{stepb}{Step}
\newtheorem{substepb}{Substep}[stepb]
\newtheorem{case}{Case}
\newcommand\BeginClaims{\setcounter{claims}{0}}
\newcommand\BeginCases{\setcounter{case}{0}}
\newcommand\BeginStepsa{\setcounter{stepa}{0}}
\newcommand\BeginStepsb{\setcounter{stepb}{0}}
\newcommand\BeginSubStepsb{\setcounter{substepb}{0}}
\theoremstyle{definition}
\theoremstyle{remark}
\newtheorem*{remark}{Remark}
\DeclareMathOperator{\Ker}{ker}
\DeclareMathOperator{\Mod}{Mod}
\DeclareMathOperator{\Sp}{Sp}
\newcommand\Z{\ensuremath{\mathbb{Z}}}
\newcommand\Q{\ensuremath{\mathbb{Q}}}
\DeclareMathOperator{\HH}{H}
\DeclareMathOperator{\Interior}{Int}
\newcommand\Span[1]{\ensuremath{\langle #1 \rangle}}
\newcommand\CaptionSpace{\hspace{0.2in}}
\newcommand\Set[2]{\ensuremath{\{\text{#1 $|$ #2}\}}}
\newcommand\Figure[3]{
\begin{figure}[t]
\centering
\centerline{\psfig{file=#2,scale=55}}
\caption{#3}
\label{#1}
\end{figure}}
\newcommand\TorelliRelHat[2]{\ensuremath{\widehat{\mathcal I}(#1,#2)}}
\newcommand\TorelliRel[2]{\ensuremath{{\mathcal I}(#1,#2)}}
\newcommand\Torelli[1]{{\ensuremath{{\mathcal I}(#1)}}}
\newcommand\JKerRelHat[2]{\ensuremath{\widehat{\mathcal K}(#1,#2)}}
\newcommand\JKerRel[2]{\ensuremath{{\mathcal K}(#1,#2)}}
\newcommand\JKer[1]{\ensuremath{{\mathcal K}(#1)}}
\newcommand\Alg{\ensuremath{\text{alg}}}
\newcommand\PPM[2]{\ensuremath{\mathcal{P}(#1;#2)}}
\newcommand\PPT[3]{\ensuremath{\mathcal{P}_{\mathcal I}(#1,#2;#3)}}
\newcommand\PPK[3]{\ensuremath{\mathcal{P}_{\mathcal K}(#1,#2;#3)}}
\newcommand\PtPsh[1]{\ensuremath{\text{Push}(#1)}}
\newcommand\PtPshHat[1]{\ensuremath{\widehat{\text{Push}}(#1)}}
\newcommand{\twoheadlongrightarrow}{\relbar\joinrel\twoheadrightarrow}
\begin{document}

\maketitle

\vspace{-30pt}
\begin{abstract}
We give a new proof of a celebrated theorem of Dennis
Johnson that asserts that the kernel of the Johnson homomorphism on the Torelli subgroup
of the mapping class group is generated by separating twists.
In fact, we prove a more general result that also applies to ``subsurface Torelli groups''.  Using
this, we extend Johnson's calculation of the rational abelianization of the Torelli group
not only to the subsurface Torelli groups, but also to finite-index subgroups of the Torelli
group that contain the kernel of the Johnson homomorphism.
\end{abstract}

\tableofcontents


\section{Introduction}

Let $\Sigma_{g}^n$ be a compact oriented genus $g$ surface with $n$ boundary components (we will often omit
the $n$ if it vanishes).
The {\em mapping class group} of $\Sigma_{g}^n$, denoted $\Mod(\Sigma_{g}^n)$, is
the group of isotopy classes of orientation preserving homeomorphisms
of $\Sigma_{g}^n$ that act as the identity on $\partial \Sigma_{g}^n$ (see \cite{FarbMargalitPrimer}).
The group $\Mod(\Sigma_{g}^n)$ acts on $\HH_1(\Sigma_{g}^n;\Z)$
and preserves the algebraic intersection form.  If $n \leq 1$,
then this is a nondegenerate alternating form,
so we obtain a representation $\Mod(\Sigma_{g}^n) \rightarrow \Sp_{2g}(\Z)$.
This representation is well-known to be surjective.
Its kernel $\Torelli{\Sigma_g^n}$ is known as the {\em Torelli group}.  This is all
summarized in the exact sequence
\[1 \longrightarrow \Torelli{\Sigma_{g}^n} \longrightarrow \Mod(\Sigma_{g}^n) \longrightarrow \Sp_{2g}(\Z) \longrightarrow 1.\]

\paragraph{Johnson's work.}
In the early 1980's, Dennis Johnson wrote a sequence of remarkable papers about $\Torelli{\Sigma_{g}^n}$ (see
\cite{JohnsonSurvey} for a survey).  Many of his results center on the
{\em Johnson homomorphisms}, which he defined in \cite{JohnsonHomo}.
Letting $H = \HH_1(\Sigma_{g}^n;\Z)$, these are homomorphisms
\[\tau_{\Sigma_{g}^1} : \Torelli{\Sigma_{g}^1} \longrightarrow \wedge^3 H \quad \text{and} \quad \tau_{\Sigma_g} : \Torelli{\Sigma_g} \longrightarrow (\wedge^3 H) / H\]
which measure the ``unipotent'' part of the action of $\Torelli{\Sigma_{g}^n}$ on the second nilpotent truncation
of $\pi_1(\Sigma_{g}^n)$.  The Johnson homomorphisms subsequently showed up in many
different parts of mathematics; for instance, $3$-manifold topology 
(see, e.g.\ \cite{GaroufalidisLevine}), the geometry of the moduli space of curves (see, e.g.\ \cite{HainTorelli, PutmanParkCity}),
and number theory (see, e.g.,\ \cite{MatsumotoParkCity}).  The kernel $\JKer{\Sigma_g^n}$ of $\tau_{\Sigma_g^n}$ is known as
the {\em Johnson kernel}.

\paragraph{Our results.}
The goal of this paper is to generalize two of Johnson's theorems.
\begin{compactenum}
\item For $g \geq 0$ and $n \leq 1$, Johnson \cite{JohnsonKg} proved that $\JKer{\Sigma_{g}^n}$ is generated by
{\em separating twists}, that is, Dehn twists $T_x$ about separating curves $x$.  
We will generalize this to the ``subsurface Torelli groups'' defined
by the author in \cite{PutmanCutPaste}.  Our proof is independent of Johnson's work, so in particular
it provides a new proof of his theorem.  Our new proof is much less computationally intensive than Johnson's
proof.
\item For $g \geq 3$ and $n \leq 1$, Johnson \cite{JohnsonAbel} proved that modulo
torsion the Johnson homomorphism gives the abelianization of $\Torelli{\Sigma_{g}^n}$.  We will generalize
this not only to subsurface Torelli groups, but also to certain finite-index subgroups of $\Torelli{\Sigma_{g}^n}$.
\end{compactenum}

\paragraph{Subsurface Torelli groups.}
At this point, we have only discussed the Torelli group on surfaces with at most one boundary
component.  Surfaces with multiple boundary components often arise during proofs by
induction on the genus of a surface.  One argues
that some phenomena is ``concentrated'' on subsurfaces of a closed surface, which are lower-genus
surfaces with boundary.  For the Torelli group, this leads to the following definition (see \cite{PutmanCutPaste}).
Let $S$ be a compact connected surface with boundary and let $S \hookrightarrow \Sigma_g^n$ be an
embedding with $n \leq 1$.  We then define $\TorelliRelHat{S}{\Sigma_g^n}$ to be the subgroup
of $\Torelli{\Sigma_g^n}$ consisting of mapping classes which are {\em supported} on $S$, that is,
which can be realized by homeomorphisms that are the identity on $\Sigma_g^n \setminus S$.  The
group so obtained depends strongly on the embedding $S \hookrightarrow \Sigma_g^n$; see
\cite{PutmanCutPaste} for a discussion.

\begin{remark}
See below for the reason for the ``hat'' in $\TorelliRelHat{S}{\Sigma_g^n}$.
\end{remark}

\paragraph{The Johnson kernel.}
For a compact connected subsurface $S$ of $\Sigma_g^n$ with $n \leq 1$, define $\JKerRelHat{S}{\Sigma_g^n}$
to be the kernel of the restriction of the Johnson homomorphism $\tau_{\Sigma_g^n}$ to $\TorelliRelHat{S}{\Sigma_g^n} \subset \Torelli{\Sigma_g^n}$.
Our first theorem is as follows.  When $S = \Sigma_g^n$, it reduces to the aforementioned theorem of Johnson giving
generators for $\JKer{\Sigma_g^n}$.

\begin{maintheorem}[Generators for Johnson kernel]
\label{maintheorem:genjohnsonker}
For $g \geq 2$ and $n \leq 1$, let $S \hookrightarrow \Sigma_g^n$ be a compact
connected subsurface of genus at least $2$.  Then $\JKerRelHat{S}{\Sigma_g^n}$
is generated by
$\{\text{$T_{\gamma}$ $|$ $\gamma$ is a separating curve on $\Sigma_g^n$ and $\gamma \subset S$}\}$.
\end{maintheorem}

\begin{remark}
We believe that the restriction on the genus of $S$ is necessary.  Let $S \hookrightarrow \Sigma_3$ be
the subsurface depicted in Figure \ref{figure:introduction} and let $x,y \subset S$ be the curves shown
there.  Then it is not hard to see that $[T_x,T_y] \in \JKerRelHat{S}{\Sigma_3}$, but we conjecture that
$[T_x,T_y]$ cannot be written as a product of separating twists lying in $S$.
\end{remark}

\begin{remark}
As indicated in the proof sketch below, the first step of our proof reduces Theorem \ref{maintheorem:genjohnsonker}
to Johnson's result.  However, the extra generality of Theorem \ref{maintheorem:genjohnsonker} is needed for the second
step, which reduces Johnson's result to understanding subsurfaces.  In other words, from our point of view
the more general statement is necessary even if you only care about the classical case.
\end{remark}

\paragraph{Abelianizing Torelli and its subgroups.}
Our second theorem is as follows.  When $S = \Sigma_g^n$, it reduces to Johnson's aforementioned theorem asserting that the Johnson homomorphism
gives the abelianization of $\Torelli{\Sigma_g^n}$ modulo torsion.

\begin{maintheorem}[Rational $\HH_1$ of finite-index subgroups of Torelli]
\label{maintheorem:abeltorelli}
For $g \geq 3$ and $n \leq 1$, let $S \hookrightarrow \Sigma_g^n$ be
a compact connected subsurface of genus at least $3$.
Let $\Gamma$ be a finite-index subgroup of $\TorelliRelHat{S}{\Sigma_g^n}$
with $\JKerRelHat{S}{\Sigma_g^n} < \Gamma$.  Then
$\HH_1(\Gamma;\Q) \cong \tau_{\Sigma_g^n}(\TorelliRelHat{S}{\Sigma_g^n}) \otimes \Q$.
\end{maintheorem}

\begin{remark}
After an early version of this paper was circulated, Dimca, Hain, and Papadima \cite{DimcaHainPapadima} calculated
$\HH_1(\JKer{\Sigma_g^n};\Q)$ for $g \geq 6$.  Their proof made use of Theorem \ref{maintheorem:abeltorelli}.
\end{remark}

\begin{remark}
Mess \cite{MessTorelli} showed that $\Torelli{\Sigma_2}$ is an infinitely generated free group.  Using this,
it is not hard to show that if $S \hookrightarrow \Sigma_g^n$ is a subsurface of genus $2$, then
$\HH_1(\TorelliRelHat{S}{\Sigma_g^n};\Q)$ has infinite rank.  In particular, the conclusion of Theorem \ref{maintheorem:abeltorelli}
does not hold.
\end{remark}

\Figure{figure:introduction}{Introduction}{$S \cong \Sigma_{1,3}$ is embedded in $\Sigma_3$.  We conjecture
that $[T_x,T_y]$ cannot be written as a product of separating twists in $\TorelliRelHat{S}{\Sigma_3}$}

\begin{remark}
After this paper was written, Church \cite{ChurchJohnson} showed how to define the Johnson homomorphism
on $\TorelliRelHat{S}{\Sigma_g^n}$ intrinsically, that is, without reference to the Johnson homomorphism
on $\Torelli{\Sigma_g^n}$.  This allowed him to compute the rank of $\tau_{\Sigma_g^n}(\TorelliRelHat{S}{\Sigma_g^n}) \otimes \Q$.
\end{remark}

\begin{remark}
Theorem \ref{maintheorem:abeltorelli} is inspired by a well-known conjecture of Ivanov that asserts that if
$g \geq 3$ and $\Gamma$ is a finite-index subgroup of $\Mod(\Sigma_g)$,
then $\HH_1(\Gamma;\Q) \cong \HH_1(\Mod(\Sigma_g);\Q) = 0$ (see \cite{IvanovProblems} for a recent
discussion).  In analogy with this, one might guess that the condition $\JKerRelHat{S}{\Sigma_g^n} < \Gamma$
is unnecessary.  We remark that Ivanov's conjecture is only known in a
few special cases -- Hain \cite{HainTorelli} proved it if $\Torelli{\Sigma_g} < \Gamma$, and Boggi \cite{Boggi} and
the author \cite{PutmanFiniteIndexNote} later independently proved that it holds if
$\JKer{\Sigma_g} < \Gamma$.
\end{remark}

\paragraph{About the hats.}
Let $S \hookrightarrow \Sigma_g^n$ be a subsurface with $n \leq 1$.  For simplicity, assume
that $S$ is a {\em clean} embedding, that is, no components of $\Sigma_g^n \setminus \Interior(S)$ are discs.
Instead of the subgroups $\TorelliRelHat{S}{\Sigma_g^n}$ and $\JKerRelHat{S}{\Sigma_g^n}$ of $\Mod(\Sigma_g^n)$
discussed above, most of this paper will study the groups $\TorelliRel{S}{\Sigma_g^n}, \JKerRel{S}{\Sigma_g^n} \subset \Mod(S)$
obtained by pulling these groups back under the map $\Mod(S) \rightarrow \Mod(\Sigma_g^n)$ that extends
mapping classes by the identity.  The natural surjections 
$\TorelliRel{S}{\Sigma_g^n} \rightarrow \TorelliRelHat{S}{\Sigma_g^n}$ and 
$\JKerRel{S}{\Sigma_g^n} \rightarrow \JKerRelHat{S}{\Sigma_g^n}$ are usually (but not always) isomorphisms; see
\S \ref{section:modbasic}.  Theorem \ref{maintheorem:genjohnsonker} above will derived from Theorem
\ref{theorem:genjohnsonker} below, which is an analogous (but slightly more awkward) theorem about
$\JKerRel{S}{\Sigma_g^n}$.  See the remark after Theorem \ref{theorem:birmantorellinosep} below for a discussion
of why we mostly focus on $\TorelliRel{S}{\Sigma_g^n}$ and $\JKerRel{S}{\Sigma_g^n}$.

\paragraph{Proof ideas for Theorem \ref{maintheorem:genjohnsonker}.}
Let $S$ and $\Sigma_g^n$ be as in Theorem \ref{maintheorem:genjohnsonker}.  Our proof of Theorem
\ref{maintheorem:genjohnsonker} is by induction on the genus and number of boundary components of $S$.  
The base case of the induction is when $S \cong \Sigma_2$, in which case $S$ is the entire surface $\Sigma_g^n$ (in
particular, $g=2$ and $n=0$).
It turns out that the Johnson homomorphism vanishes on $\Torelli{\Sigma_2}$, so we have $\JKer{\Sigma_2} = \Torelli{\Sigma_2}$.  We
can therefore appeal to a theorem of Powell \cite{PowellTorelli} which says that $\Torelli{\Sigma_2}$ is
generated by separating twists.  The inductive argument then has two steps.

\BeginStepsa
\begin{stepa}[Reduction to the closed case]
\label{step:closedreductionintro}
Assume that $S \cong \Sigma_h^m$ and that $\JKer{\Sigma_h}$ is generated by
separating twists.  Then $\JKerRelHat{S}{\Sigma_g^n}$ is generated by separating
twists.
\end{stepa}

This step is achieved by a version of the classical Birman exact sequence, which we will comment more
on below.  The second step of our induction is as follows.

\begin{stepa}[Reducing the genus]
\label{step:genusreductionintro}
Assume that $\JKerRelHat{S}{\Sigma_g}$ is generated by separating twists for all subsurfaces $S$ of $\Sigma_g$ satisfying $S \cong \Sigma_{g-1}^2$.
Then $\JKer{\Sigma_g}$ is generated by separating twists.
\end{stepa}

For this step, we use a result of the author \cite{PutmanSmallGenset} which says that $\Torelli{\Sigma_g}$ is
generated by elements living on certain subsurfaces of $\Torelli{\Sigma_g}$.

\begin{remark}
An earlier version of this paper performed Step \ref{step:genusreductionintro} 
by examining the action of $\JKer{\Sigma_g}$ on a simplicial
complex related to the curve complex.  Our new approach is both shorter and less technical.
\end{remark}

\paragraph{The Birman exact sequence.}
As we said above, the first step of our proof of Theorem \ref{maintheorem:genjohnsonker} uses a version of the Birman exact sequence.
The classical version of this is a fundamental tool for relating the mapping class groups of surfaces with differing numbers of boundary components.  
For $g \geq 2$ and $n \leq 1$, it takes the form
\[1 \longrightarrow \pi_1(U\Sigma_g^{n-1}) \longrightarrow \Mod(\Sigma_g^n) \longrightarrow \Mod(\Sigma_g^{n-1}) \longrightarrow 1.\]
Here $U\Sigma_g^{n-1}$ is the unit tangent bundle of $\Sigma_g^{n-1}$ and the subgroup $\pi_1(U\Sigma_g^{n-1})$ of $\Mod(\Sigma_g^n)$ is
the ``disc-pushing'' subgroup; it ``pushes'' a boundary component $\beta$ of $\Sigma_g^n$ around the surface while allowing it to rotate.
The map $\Mod(\Sigma_g^n) \rightarrow \Mod(\Sigma_g^{n-1})$ is the map obtained by gluing a disc to $\beta$ and extending mapping classes
over the disc by the identity.  A version of this for the subsurface Torelli groups was proven by the author in \cite{PutmanCutPaste}; see
\S \ref{section:torellibasic} below.  In
this paper, we give an appropriate version of this for the subsurface Johnson kernels; 
see \S \ref{section:birmanjohnsonone}--\ref{section:birmanjohnsonnosep}
for precise statements.  

\begin{remark}
After an early version of this paper was circulated, our Birman exact sequence was applied by Bestvina, Bux, and Margalit \cite{BestvinaBuxMargalit}
during their calculation of the homological dimension of the Johnson kernel.
\end{remark}

\paragraph{Proof ideas for Theorem \ref{maintheorem:abeltorelli}.}
Johnson deduced that the Johnson homomorphism gives the abelianization of the Torelli group modulo torsion by
proving that separating twists become torsion when $\Torelli{\Sigma_g^n}$ is abelianized for $g \geq 3$ and $n \leq 1$.
This reduces the result to showing that the Johnson kernel is generated by Dehn twists about
separating curves.  For Theorem \ref{maintheorem:abeltorelli}, we do the same thing; namely, we prove 
that separating twists become torsion in the abelianizations of finite-index subgroups of $\TorelliRelHat{S}{\Sigma_g^n}$
that contain $\JKerRelHat{S}{\Sigma_g^n}$.  The proof combines
a small generalization of Johnson's argument with the trick used in the author's paper \cite{PutmanFiniteIndexNote} to
show that powers of Dehn twists go to torsion in the abelianizations of finite-index subgroups of the
mapping class group.

\paragraph{Comments on Johnson's work.}
To put this paper in perspective, we now give a brief sketch of
Johnson's proof \cite{JohnsonKg} that $\JKer{\Sigma_g^n}$ is generated
by separating twists.  We will focus on the case $n=1$ and ignore some
low-genus issues.  Set $H = \HH_1(\Sigma_{g}^1;\Z)$.

Johnson's proof proceeds by a sequence of ingenious and difficult calculations.
Let $\mathcal{S} \lhd \Torelli{\Sigma_{g}^1}$ be the subgroup
generated by separating twists.  Recall that the Johnson homomorphism takes the form
\[\tau_{\Sigma_g^1} : \Torelli{\Sigma_{g}^1} \longrightarrow \wedge^3 H.\]
The free abelian group $\wedge^3 H$ has rank $\binom{g}{3}$.  Johnson begins by writing down an explicit
set $X$ of $\binom{g}{3}$ elements of $\Torelli{\Sigma_{g}^1}$ such that $\tau(X)$ is a basis for $\wedge^3 H$.  He
then does the following.
\begin{compactenum}
\item By examining the conjugates of $X$ under a standard generating set for $\Mod(\Sigma_{g}^1)$, he shows
that $X$ generates a normal subgroup of $\Mod(\Sigma_{g}^1) / \mathcal{S}$.  The set $X$ contains
a normal generating set for $\Torelli{\Sigma_{g}^1}$ identified by
Powell (\cite{PowellTorelli}; see \S \ref{section:torellibasic}
for the details of Powell's generating set).  Thus Johnson can
conclude that $X$ generates $\Torelli{\Sigma_{g}^1} / \mathcal{S}$.
\item Via a brute-force calculation, Johnson next shows that the
elements of $X$ commute modulo $\mathcal{S}$,
so $\Torelli{\Sigma_{g}^1} / \mathcal{S}$ is an abelian group generated by $\binom{g}{3}$ elements.
\item The Johnson homomorphism induces a surjection $\Torelli{\Sigma_{g}^1} / \mathcal{S} \rightarrow \wedge^3 H$.  By the
previous step, this map has to be an isomorphism, and the result follows.
\end{compactenum}

\paragraph{Outline of paper.}
In \S \ref{section:preliminaries}, we review some preliminaries about the mapping class group, the Birman
exact sequence, the Torelli group, and the Johnson homomorphisms.  We construct our version of the
Birman exact sequence for the Johnson kernel in \S \ref{section:birmanjohnson}.
We then prove Theorem \ref{maintheorem:genjohnsonker} in \S \ref{section:genjohnsonker}
and Theorem \ref{maintheorem:abeltorelli} in \S \ref{section:abeltorelli}.

\paragraph{Notation and conventions.}
In this paper, by a {\em surface} we will mean a compact oriented connected surface with boundary.  
We will denote the algebraic intersection pairing on $\HH_1(\Sigma_g;\Z)$ by $i_{\Alg}(\cdot, \cdot)$.  
A {\em symplectic basis} for $\HH_1(\Sigma_g;\Z)$ is a basis $\{a_1,b_1,\ldots,a_g,b_g\}$ for $\HH_1(\Sigma_g;\Z)$ such
that $i_{\Alg}(a_i,b_i)= 1$ and $i_{\Alg}(a_i,a_j) = i_{\Alg}(a_i,b_j) = i_{\Alg}(b_i,b_j) = 0$
for distinct $1 \leq i,j \leq g$.
If $c$ is an oriented $1$-submanifold of the surface, then $[c]$ will denote the homology class of $c$.  We will
confuse a curve on a surface with its isotopy class.  Several times we will have formulas involving $[\gamma]$, where
$\gamma$ is an unoriented simple closed curve.  By this we will mean that the formula holds for an
appropriate choice of orientation.

\paragraph{Acknowledgments.}
I would like to thank Tom Church, Benson Farb, and Dan Margalit for helpful conversations and comments.
I would also like to thank the referee for a very careful and thoughtful report.


\section{Preliminaries}
\label{section:preliminaries}

In \S \ref{section:modbasic} and \S \ref{section:torellibasic}, we will review some
basic facts about the mapping class group and the Torelli group.  Next, in \S \ref{section:birmantorelli}
we will discuss the various versions of the Birman exact sequence for the Torelli group.
Finally, in \S \ref{section:johnsonhomomorphism}, we will discuss the Johnson homomorphism.

\subsection{Basic facts about the mapping class group}
\label{section:modbasic}

We start by discussing some basic facts about the mapping class group.

\paragraph{Subsurfaces.}
If $S$ is a surface and $S' \hookrightarrow S$ is a subsurface, then there is an induced map $i : \Mod(S') \rightarrow \Mod(S)$ (``extend
by the identity'').  This map is is injective except in the following situations.
\begin{compactitem}
\item If a component $A$ of $S \setminus \Interior(S')$ is an annulus with $\partial A \subset S'$, then letting
$x$ and $y$ be the components of $\partial A$ we have $i(T_x T_y^{-1}) = 1$.
\item If a component of $S \setminus \Interior(S')$ is a disc, then $\ker(i)$ contains the ``disc-pushing subgroup''
which forms the kernel of the Birman exact sequence (see below).
\end{compactitem}
See \cite{ParisRolfsen}, which proves that $\ker(i)$ is generated by the above elements.

\paragraph{Birman exact sequence.}
The original Birman exact sequence (see Birman \cite{BirmanExact}) dealt
with the effect on mapping class groups of punctured surfaces of ``forgetting the punctures''.  Later,
Johnson \cite{JohnsonFinite} gave an appropriate version for surfaces with boundary.  Let
$S$ be a surface of genus at least $2$ with nonempty boundary.  Let $\beta$ be a boundary
component of $S$ and let $\hat{S}$ be the result of gluing a disc to $\beta$.  The
subsurface inclusion $S \hookrightarrow \hat{S}$ induces a map $i: \Mod(S) \rightarrow \Mod(\hat{S})$.
Define $\PPM{S}{\beta} = \ker(i)$.  The group $\PPM{S}{\beta}$ is called the {\em disc-pushing subgroup}
of $\Mod(S)$.  Elements of it ``push'' the boundary component $\beta$ around $S$ while allowing it to rotate;
keeping track of this path and rotation gives an isomorphism $\PPM{S}{\beta} \stackrel{\cong}{\rightarrow} \pi_1(U\hat{S})$.  
This is all summarized in the Birman exact sequence
\begin{equation}
\label{eqn:modbirmanseq}
\begin{CD}
1  @>>> \PPM{S}{\beta} @>>> \Mod(S) @>{i}>> \Mod(\hat{S}) @>>> 1 \\
@.      @VV{\cong}V         @.              @.                 @.\\
@.      \pi_1(U\hat{S})     @.              @.                 @. \end{CD}
\end{equation}

The sequence \eqref{eqn:modbirmanseq} splits if $S$ has at least $2$ boundary components.
Say that a subsurface $S' \hookrightarrow S$
is a {\em splitting surface} for $\beta$ if $S \setminus \Interior(S')$ is a $3$-holed sphere 
two of whose boundary components are components of
$\partial S$ and 
one of whose
boundary components is $\beta$ (see Figure \ref{figure:modbirman}.a).  The composition of
inclusion maps $S' \hookrightarrow S \hookrightarrow \hat{S}$ is then homotopic to a homeomorphism and the induced map 
$\Mod(S') \rightarrow \Mod(S)$
is injective.  Regarding $\Mod(S')$ as a subgroup of $\Mod(S)$, the map $i$ restricts to an isomorphism
$\Mod(S') \cong \Mod(\hat{S})$, so \eqref{eqn:modbirmanseq} splits as
\begin{equation}
\label{eqn:modsemidirect}
\Mod(S) = \PPM{S}{\beta} \rtimes \Mod(S').
\end{equation}

\Figure{figure:modbirman}{ModBirman}{a. The subsurface $S' \cong \Sigma_2^4$ of $S \cong \Sigma_2^5$ is
a splitting surface for $\beta$.
\CaptionSpace b. A simple closed curve $\gamma \in \pi_1(\hat{S})$.
\CaptionSpace c. The effect of $\PtPsh{\gamma}$.
\CaptionSpace d. $\PtPsh{\gamma} = T_{\tilde{\gamma}_1} T_{\tilde{\gamma}_2}^{-1}$}

\paragraph{Elements of the disc-pushing subgroup.}
We will need explicit formulas for some elements of $\PPM{S}{\beta} \cong \pi_1(U\hat{S})$.
The loop around the fiber of $U \hat{S}$ over the basepoint of $\hat{S}$ 
(with an appropriate orientation) corresponds to $T_{\beta}$.  
It is easiest to understand the other elements via the projection
$\PPM{S}{\beta} \cong \pi_1(U\hat{S}) \rightarrow \pi_1(\hat{S})$.
Consider $\gamma \in \pi_1(\hat{S})$.  Assume that 
$\gamma$ can be realized by a simple closed curve, which we can
assume is smoothly embedded in $\hat{S}$.  
Taking the derivative of such an embedding and rescaling to get a unit vector, we get a well-defined lift
$\PtPsh{\gamma} \in \pi_1(U\hat{S}) \cong \PPM{S}{\beta} < \Mod(S)$.

\begin{remark}
This is well-defined since any two realizations of $\gamma$ as a smoothly embedded simple closed curve are smoothly isotopic
through an isotopy fixing the basepoint.
\end{remark}

As shown in Figure \ref{figure:modbirman}.b--d, for such a $\gamma$ we have
$\PtPsh{\gamma} = T_{\tilde{\gamma}_1} T_{\tilde{\gamma}_2}^{-1}$ for simple closed curves $\tilde{\gamma}_1$ and
$\tilde{\gamma}_2$ in $S$ constructed as follows.  Let $\rho : S \rightarrow \hat{S}$
be the map which collapses $\beta$ to a point (the basepoint of $\hat{S}$).  Then
$\rho^{-1}(\gamma)$ consists of $\beta$ together with an oriented arc joining two points of $\beta$.  The boundary
of a regular neighborhood of $\rho^{-1}(\gamma)$ has two components, and the curves $\tilde{\gamma}_1$ and
$\tilde{\gamma}_2$ are the components lying to the right and left of the arc, respectively.

\subsection{Basic facts about the Torelli group}
\label{section:torellibasic}

We now turn to the Torelli group.  Recall that for $g \geq 0$ and $n \leq 1$ this is the 
kernel $\Torelli{\Sigma_g^n}$ of the action of $\Mod(\Sigma_g^n)$ on $\HH_1(\Sigma_g^n;\Z)$.

\paragraph{Generators.}
Following work of Birman \cite{BirmanSiegel}, Powell \cite{PowellTorelli} proved that
$\Torelli{\Sigma_g^n}$ is generated by the following elements for $g \geq 0$ and $n \leq 1$.
\begin{compactitem}
\item Separating twists, that is, Dehn twists $T_{x}$ about separating simple closed curves $x$
(see Figure \ref{figure:torellibirman}.a).
\item Bounding pair maps, that is, products $T_y T_z^{-1}$ for 
disjoint nonseparating homologous simple closed curves $y$ and $z$,
(so called because $y \cup z$ separates $\Sigma_g^n$; see Figure
\ref{figure:torellibirman}.a).
\end{compactitem}

\paragraph{Subsurface Torelli groups.}
Thus far, we have only defined the Torelli group on a surface $\Sigma_g^n$ with $n \leq 1$.
Now consider an arbitrary surface $S$.  Fix an embedding $S \hookrightarrow \Sigma_g^n$ with $n \leq 1$,
let $i : \Mod(S) \rightarrow \Mod(\Sigma_g^n)$ be the induced map, and define
\[\TorelliRel{S}{\Sigma_g^n} = i^{-1}(\Torelli{\Sigma_g^n}).\]
The subgroup $\TorelliRel{S}{\Sigma_g^n}$ of $\Mod(S)$ so obtained depends
on the embedding $S \hookrightarrow \Sigma_g^n$.  In \cite{PutmanCutPaste},
the author showed that the Torelli groups on $S$ associated to different
embeddings into a surface with at most one boundary component are parametrized
by a partition of the boundary components of $S$, but we will not need this.

\Figure{figure:torellibirman}{TorelliBirman}{a. A separating twist $T_x$ and a bounding pair
map $T_y T_z^{-1}$.
\CaptionSpace b. The subsurface $S \cong \Sigma_2^5$ of $\Sigma_6$ is bounded by the five ``vertical'' curves.  The subsurface
$S' \cong \Sigma_2^4$ of $S$ is a $\Sigma_6$-splitting surface for $\beta$.}

\begin{remark}
Defining $\TorelliRelHat{S}{\Sigma_g^n}$ as in the introduction, 
we have $i(\TorelliRel{S}{\Sigma_g^n}) = \TorelliRelHat{S}{\Sigma_g^n}$.  Also, $i$ restricts to an isomorphism
from $\TorelliRel{S}{\Sigma_g^n}$ to $\TorelliRelHat{S}{\Sigma_g^n}$ except for the situations discussed
in \S \ref{section:modbasic} where $i$ is not injective.
\end{remark}

\begin{lemma}
\label{lemma:degeneratetorelli}
The following hold for all $n \leq 1$.
\begin{compactenum}[(a)]
\item If $S \cong \Sigma_h^1$ and $S \hookrightarrow \Sigma_g^n$ is any embedding, then
$\TorelliRel{S}{\Sigma_g^n} = \Torelli{S} \cong \Torelli{\Sigma_h^1}$.
\item If $S \hookrightarrow \Sigma_g^1$ and $\Sigma_g^1 \hookrightarrow \Sigma_h^n$ are
embeddings, then $\TorelliRel{S}{\Sigma_g^1} = \TorelliRel{S}{\Sigma_h^n}$.  
\end{compactenum}
\end{lemma}
\begin{proof}
Trivial (see \cite[Theorem Summary 1.1]{PutmanCutPaste} for a more general result).
\end{proof}

\noindent
The paper \cite{PutmanCutPaste} extends Powell's computation of generators
of $\Torelli{\Sigma_g^n}$ to the groups $\TorelliRel{S}{\Sigma_g^n}$.  If $x$
is a nonnullhomotopic simple closed curve on $S$, then say that
$T_x \in \Mod(S)$ is a {\em $\Sigma_g^n$-separating twist} if $x$ separates
$\Sigma_g^n$.  Similarly, if $y$ and $z$ are disjoint nonnullhomotopic simple closed curves
on $S$, then say that $T_y T_z^{-1} \in \Mod(S)$ is a {\em $\Sigma_g^n$-bounding pair map}
if $y$ and $z$ are nonseparating and homologous in $\Sigma_g^n$.  Then
$\TorelliRel{S}{\Sigma_g^n}$ is generated by the set of $\Sigma_g^n$-separating twists
and $\Sigma_g^n$-bounding pair maps if the genus of $S$ is at least $1$ (see \cite[Theorem 1.3]{PutmanCutPaste}).

\subsection{Birman exact sequence for Torelli group}
\label{section:birmantorelli}

Let $S \hookrightarrow \Sigma_g^n$ be a subsurface with $n \leq 1$ and let $\beta$ be a boundary
component of $S$.  Define $\PPT{S}{\Sigma_g^n}{\beta} = \PPM{S}{\beta} \cap \TorelliRel{S}{\Sigma_g^n}$.
The Birman exact sequence for the Torellli group (proved by the author
in \cite{PutmanCutPaste}) relates $\TorelliRel{S}{\Sigma_g^n}$ to a Torelli group
defined on the result of gluing a disc to $\beta$.  There are three versions.

\paragraph{Birman exact sequence for Torelli, one boundary component.}
The first is as follows,
which mildly generalizes a theorem of Johnson \cite{JohnsonFinite}.

\begin{theorem}[{\cite[Theorem 1.2]{PutmanCutPaste}}]
\label{theorem:birmantorellione}
Let $S \hookrightarrow \Sigma_g^n$ be a subsurface with $n \leq 1$ and let $\beta$ be a boundary
component of $S$.  Assume that the genus of $S$ is at least $2$ and that $\beta$ is the only
boundary component of $S$. Let $\hat{S}$ be the result of gluing a disc to $\beta$.  Then
there is an exact sequence
\[1 \longrightarrow \PPT{S}{\Sigma_g^n}{\beta} \longrightarrow \TorelliRel{S}{\Sigma_g^n} \longrightarrow \Torelli{\hat{S}} \longrightarrow 1.\]
Also, $\PPT{S}{\Sigma_g^n}{\beta} = \PPM{S}{\beta} \cong \pi_1(U\hat{S})$.
\end{theorem}

\begin{remark}
Observe that in Theorem \ref{theorem:birmantorellione}, the surface $\hat{S}$ is closed, so $\Torelli{\hat{S}}$ makes sense.
\end{remark}

\paragraph{Birman exact sequence for Torelli, separating boundary.}
The second version of the Birman exact sequence for the Torelli group is as follows.  In it
(and throughout this paper), we regard a boundary component of a surface $\Sigma_g^n$ as being
a separating curve on $\Sigma_g^n$.

\begin{theorem}[{\cite[Theorem 1.2]{PutmanCutPaste}}]
\label{theorem:birmantorellisep}
Let $S \hookrightarrow \Sigma_g^n$ be a subsurface with $n \leq 1$ and let $\beta$ be a boundary
component of $S$.  Assume that the genus of $S$ is at least $2$, that $S$ has more than
one boundary component, and that $\beta$ is a separating curve on $\Sigma_g^n$.
Let $S' \hookrightarrow S$ be a splitting surface for $\beta$ and let $\hat{S}$ be
the result of gluing a disc to $\beta$.  Then
\[\TorelliRel{S}{\Sigma_g^n} = \PPT{S}{\Sigma_g^n}{\beta} \rtimes \TorelliRel{S'}{\Sigma_g^n}.\]
Also, $\PPT{S}{\Sigma_g^n}{\beta} = \PPM{S}{\beta} \cong \pi_1(U\hat{S})$.
\end{theorem}

\paragraph{Subsurface homology map.}
To deal with the case where $\beta$ does not separate $\Sigma_g^n$, we need to introduce
the {\em subsurface homology map} of $S$.  We will actually need this map for separating
boundary components too, so $\beta$ is still an arbitrary boundary component of $S$.  Let
$\hat{S}$ be the surface obtained by gluing a disc to $S$ along $\beta$.  
The inclusion map $S \hookrightarrow \Sigma_g^n$ induces a map
$\HH_1(S;\Z) \rightarrow \HH_1(\Sigma_g^n;\Z)$.  This in turn descends to a map
$\HH_1(\hat{S};\Z) \rightarrow \HH_1(\Sigma_g^n;\Z) / \Span{[\beta]}$ which we will
call the {\em subsurface homology map} of $S$ and will denote $\sigma_{S,\Sigma_g^n,\beta}$ 

\begin{remark}
If $\beta$ separates $\Sigma_g^n$,
then $\sigma_{S,\Sigma_g^n,\beta}$ has codomain $\HH_1(\Sigma_g^n;\Z)$.
\end{remark}

\paragraph{Relative commutator subgroup.}
We will denote by $[\pi_1(\hat{S}),\pi_1(\hat{S})]_{\Sigma_g^n}$ the kernel of the composition
\[\pi_1(\hat{S}) \rightarrow \HH_1(\hat{S};\Z) \stackrel{\sigma_{S,\Sigma_g^n,\beta}}{\longrightarrow} \HH_1(\Sigma_g^n;\Z) / \Span{[\beta]}.\]
If $K \subset \HH_1(\hat{S};\Z)$ is the kernel of $\sigma_{S,\Sigma_g^n,\beta}$, then we have a short
exact sequence
\begin{equation}
\label{eqn:decomposecommutator}
1 \longrightarrow [\pi_1(\hat{S}),\pi_1(\hat{S})] \longrightarrow [\pi_1(\hat{S}),\pi_1(\hat{S})]_{\Sigma_g^n} \longrightarrow K \longrightarrow 1.
\end{equation}
We will need the following observation.

\begin{lemma}
\label{lemma:relativecommutatorunchanged}
Let $S \hookrightarrow \Sigma_g^1$ and $\Sigma_g^1 \hookrightarrow \Sigma_h^n$ be
subsurfaces with $n \leq 1$.  Let $\beta$ be a boundary component of $S$
and let $\hat{S}$ be the result of gluing a disc to $\beta$.  Then
$[\pi_1(\hat{S}),\pi_1(\hat{S})]_{\Sigma_g^1} = [\pi_1(\hat{S}),\pi_1(\hat{S})]_{\Sigma_h^n}$.
\end{lemma}
\begin{proof}
Since the map $\HH_1(\Sigma_g^1;\Z) \rightarrow \HH_1(\Sigma_h^n;\Z)$ is injective,
the kernels of $\sigma_{S,\Sigma_g^1,\beta}$ and $\sigma_{S,\Sigma_h^n,\beta}$ are the
same.  The lemma now follows from \eqref{eqn:decomposecommutator}.
\end{proof}

\paragraph{$\mathbf{\Sigma_g^n}$-splitting surfaces.}
Say that a splitting surface $S' \hookrightarrow S$ for $\beta$
is a {\em $\Sigma_g^n$-splitting surface} if $\Sigma_g^n \setminus \Interior(S')$ has
the same number of components as $\Sigma_g^n \setminus \Interior(S)$ (see Figure
\ref{figure:torellibirman}.b; in the language of \cite{PutmanCutPaste}, this assumption implies
that both boundary components of $S \setminus S'$ lie in the same partition element).
Observe that a $\Sigma_g^n$-splitting surface exists for $\beta$ if and only if
$\beta$ is a nonseparating curve on $\Sigma_g^n$.

\paragraph{Birman exact sequence for Torelli, nonseparating boundary.}
The third version of the Birman exact sequence for the Torelli group is as follows. 

\begin{theorem}[{\cite[Theorem 1.2]{PutmanCutPaste}}]
\label{theorem:birmantorellinosep}
Let $S \hookrightarrow \Sigma_g^n$ be a subsurface with $n \leq 1$ and let $\beta$ be a boundary
component of $S$.  Assume that the genus of $S$ is at least $2$ and 
that $\beta$ is a nonseparating curve on $\Sigma_g^n$.
Let $S' \hookrightarrow S$ be a $\Sigma_g^n$-splitting surface for $\beta$ and let $\hat{S}$ be
the result of gluing a disc to $\beta$.  Then
\[\TorelliRel{S}{\Sigma_g^n} = \PPT{S}{\Sigma_g^n}{\beta} \rtimes \TorelliRel{S'}{\Sigma_g^n}.\]
Also, $\PPT{S}{\Sigma_g^n}{\beta} \cong [\pi_1(\hat{S}),\pi_1(\hat{S})]_{\Sigma_g^n}$.
\end{theorem}

\begin{remark}
Unlike $\TorelliRel{S}{\Sigma_g^n} \subset \Mod(S)$ in Theorem \ref{theorem:birmantorellinosep}, the group
$\TorelliRelHat{S}{\Sigma_g^n} \subset \Mod(\Sigma_g^n)$ does {\em not} always split as a semidirect product
(there is a Birman exact sequence, but it need not be split; this issue arises when the component
$C$ of $\Sigma_g^n \setminus \Interior(S)$ containing $\beta$ is an annulus).  This is the whole reason
we went to the trouble of introducing the groups $\TorelliRel{S}{\Sigma_g^n}$ -- the above semidirect
product decomposition will be very convenient, especially when we introduce the Johnson homomorphism.
\end{remark}

\paragraph{The embedding.}
For the rest of this section, let the notation be as in Theorem \ref{theorem:birmantorellinosep}.
The assumptions about $\beta$ in that
theorem imply that $\hat{S}$ has a nonempty boundary,
so there is an (unnatural) direct sum decomposition $\pi_1(U\hat{S}) \cong \pi_1(\hat{S}) \times \Z$.  However, though
$\PPT{S}{\Sigma_g^n}{\beta}$ is isomorphic to a subgroup of $\pi_1(\hat{S})$, it does {\em not} lie in the first
factor of such a direct sum decomposition.  Rather, for every such decomposition there exists a homomorphism $\psi : [\pi_1(\hat{S}),\pi_1(\hat{S})]_{\Sigma_g^n} \rightarrow \Z$
such that
\begin{equation}
\label{eqn:embedcommutator}
\PPT{S}{\Sigma_g^n}{\beta} \cong \Set{$(x,\psi(x))$}{$x \in [\pi_1(\hat{S}),\pi_1(\hat{S})]_{\Sigma_g^n}$} \subset \pi_1(\hat{S}) \times \Z \cong \pi_1(U\hat{S}).
\end{equation}
For a proof of this, see \cite[Theorem 4.1]{PutmanCutPaste}.

\paragraph{Generating the relative commutator subgroup.}
We will need to know generators for the group $[\pi_1(\hat{S}),\pi_1(\hat{S})]_{\Sigma_g^n}$ that
appears in Theorem \ref{theorem:birmantorellinosep}.
The first type of generator are {\em torus-bounding curves}, which are 
$\gamma \in [\pi_1(\hat{S}),\pi_1(\hat{S})]$ that can be realized by
simple closed curves that separate $S$ into two subsurfaces, one of which is a one-holed torus
(see Figures \ref{figure:explicitformulas}.a,b).  The following lemma says that these generate
$[\pi_1(\hat{S}),\pi_1(\hat{S})]$.

\begin{lemma}[{\cite[Theorem A.1]{PutmanCutPaste}}]
\label{lemma:commgen}
Let $X$ be a surface whose genus is at least $1$.  Pick $\ast \in \Interior(X)$.  Then $[\pi_1(X,\ast),\pi_1(X,\ast)]$
is generated by the set of elements that can be realized by torus-bounding curves.
\end{lemma}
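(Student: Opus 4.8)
The plan is to reduce, via the standard commutator identities, to the single claim that the subgroup $\mathcal{N}\le[\pi_1(X,\ast),\pi_1(X,\ast)]$ generated by the curves named in the statement is \emph{normal} in $\pi_1(X,\ast)$, together with two small inputs. First come the reductions. If $X$ is closed, delete an open disc to obtain a clean embedding $\Sigma_{g,1}\hookrightarrow X$; the induced surjection $\pi_1(\Sigma_{g,1},\ast)\to\pi_1(X,\ast)$ carries $[\pi_1,\pi_1]$ onto $[\pi_1,\pi_1]$ and sends a simple closed curve cutting off a $\Sigma_{1,1}$ to one of the same kind, so we may assume $\partial X\ne\emptyset$ and $\pi_1(X,\ast)$ is free. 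Fix a free basis $\{a_1,b_1,\dots,a_g,b_g,c_1,\dots\}$ coming from a handle decomposition, each basis element realized by a based simple closed curve and each $\{a_i,b_i\}$ a ``handle pair''. Since $[\pi_1(X,\ast),\pi_1(X,\ast)]$ is generated by commutators, and by the identities $[uv,w]={}^{u}[v,w]\,[u,w]$ and $[u,vw]=[u,v]\,{}^{v}[u,w]$ it is generated by conjugates of commutators of basis elements, once $\mathcal{N}$ is known to be normal it suffices to show $[x_j,x_k]\in\mathcal{N}$ for all basis elements $x_j,x_k$.

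Second, I would prove that $\mathcal{N}\lhd\pi_1(X,\ast)$. It is enough to show ${}^{s}c\in\mathcal{N}$ whenever $c$ is the based boundary of an embedded one-holed torus $T$ and $s$ is one of the basis curves; put $s$ in minimal position with respect to $T$. If $s$ misses the interior of $T$ (meeting $T$, if at all, only at $\ast$ and unlinked there), then ${}^{s}c$ is the based boundary of an isotopic copy of $T$ with the connecting arc rerouted along $s$, hence lies in $\mathcal{N}$. If $s$ crosses into $T$, a cut-and-paste along the arcs of $s\cap T$ rewrites ${}^{s}c$ as a product of based boundaries of one-holed tori. The only remaining case is $s\subset T$, where ${}^{s}c$ is a conjugate of $\partial T$ by an element of $\pi_1(T,\ast)\cong F_2$; this is handled by the rank-$2$ fact that in $\Sigma_{1,1}$ \emph{every} conjugate of the boundary class is represented by a based simple closed curve, which in turn holds because every free basis of $\pi_1(\Sigma_{1,1},\ast)$ is realized by a pair of simple closed curves meeting once at $\ast$.

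Third, granting normality, I finish the reduction. If $\{x_j,x_k\}$ is a handle pair, then $[x_j,x_k]$ is the boundary of the corresponding one-holed torus, and since $\mathrm{genus}(X)\ge 1$ its complementary side is connected, so $[x_j,x_k]\in\mathcal{N}$. Otherwise $x_j$ and $x_k$ are disjoint simple closed curves, and here I would use the identity $[x_j,x_k]={}^{x_j}[x_k,z]\cdot[x_j z,x_k]$, valid for any $z$, where $z$ is a simple closed curve chosen disjoint from $x_j$ with $i(z,x_k)=1$; such a $z$ exists because $X$ has positive genus, so $x_k$ has a dual curve in the complement of $x_j$. For this $z$, both $[x_k,z]$ and $[x_j z,x_k]$ are based boundaries of one-holed tori (regular neighborhoods of $x_k\cup z$ and of $x_j z\cup x_k$, whose single boundary curves separate off a $\Sigma_{1,1}$ since the complements are connected), so both factors, and hence $[x_j,x_k]$, lie in $\mathcal{N}$.

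The main obstacle is the normality claim of the second step, and within it the cut-and-paste bookkeeping for a conjugator that crosses the torus, together with the realization statement for free bases of $F_2$. This is also exactly where the hypothesis $\mathrm{genus}(X)\ge 1$ is used in an essential way: it supplies both the spare handle needed to produce the auxiliary curve $z$ of the third step and the room to reroute connecting arcs when absorbing conjugators.
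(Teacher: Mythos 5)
This paper does not actually prove Lemma \ref{lemma:commgen}; it imports it from \cite[Theorem A.1]{PutmanCutPaste}, so I can only judge your argument on its own terms. Your skeleton is reasonable---reduce to conjugates of commutators of basis elements, prove that the subgroup $\mathcal{N}$ generated by the designated curves is normal, then treat basic commutators---and the identity $[x_j,x_k]={}^{x_j}[x_k,z]\cdot[x_jz,x_k]$ does check out. But there is a genuine gap in your third step: the dual curve $z$ you invoke need not exist. A free basis coming from a handle decomposition necessarily contains boundary-parallel elements $c_1,\dots,c_{n-1}$ (one for all but one boundary component), and a boundary-parallel curve $c_l$ cuts off an annulus, so every closed curve crosses it an even number of times; hence no simple closed curve $z$ has $i(z,c_l)=1$, positive genus notwithstanding. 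When only one of $x_j,x_k$ is boundary-parallel you can swap their roles, but when both are---which happens as soon as $X$ has three or more boundary components, a case genuinely needed for the paper's application through Lemma \ref{lemma:realcomm}---your argument produces nothing. The gap is repairable with your own tools (for instance write $c_l=(c_la_1^{-1})\cdot a_1$, expand $[c_l,c_m]$ by $[uv,w]={}^{u}[v,w]\,[u,w]$, and note that $c_la_1^{-1}$ and $a_1$ both admit dual curves disjoint from $c_m$), but as stated the existence claim for $z$ is false.

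The second weak point is the normality step. The sentence about ``a cut-and-paste along the arcs of $s\cap T$'' is not an argument, and the ``rank-$2$ fact'' about bases of $\pi_1(\Sigma_{1,1},\ast)$ is asserted rather than proved. Fortunately the whole step collapses to one observation: every inner automorphism of $\pi_1(X,\ast)$ is induced by a point-pushing homeomorphism of $X$ fixing $\ast$ (exactly the fact this paper uses in the proof of Proposition \ref{proposition:jkerexactseqnosep}), and such a homeomorphism carries a based simple closed curve cutting off a $\Sigma_{1,1}$ to another curve of the same kind. Hence the generating set of $\mathcal{N}$ is conjugation-invariant and $\mathcal{N}$ is normal, with no case analysis at all. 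With that substitution and the repair above, your outline does assemble into a correct proof.
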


\noindent
The other type of element needed to generate $[\pi_1(\hat{S}),\pi_1(\hat{S})]_{\Sigma_g^n}$ are
{\em $\Sigma_g^n$-boundary curves}, which are elements $\gamma \in \pi_1(\hat{S})$ that can be
realized by simple closed curve such that there exists a component $M$ of
$\Sigma_g^n \setminus \Interior(S)$ satisfying the following two conditions (see Figures \ref{figure:explicitformulas}.a,d).
\begin{compactitem}
\item $M \cap \beta = \emptyset$, so $M \cap S$ lies in $\hat{S}$, and
\item $\gamma$ separates $\hat{S}$ into two components,
one of which is homeomorphic to a sphere whose boundary components consist of $\gamma$ together
with $M \cap S \subset \hat{S}$.
\end{compactitem}
We will say that $\gamma$ {\em surrounds} $M$.  We then have the following lemma.

\begin{lemma}
\label{lemma:relcommgen}
Let $S$ be a surface whose genus is at least $1$ and let $S \hookrightarrow \Sigma_g^n$ be an embedding
with $n \leq 1$.  Let $\beta$ be a boundary component of $S$ and let $\hat{S}$ be the result of gluing a disc
to $S$ along $\beta$.  Then $[\pi_1(\hat{S}),\pi_1(\hat{S})]_{\Sigma_g^n}$ is generated by the set of elements
that can be realized by torus-bounding curves or $\Sigma_g^n$-boundary curves.
\end{lemma}
\begin{proof}
Let $\Gamma$ be the subgroup of $[\pi_1(\hat{S}),\pi_1(\hat{S})]_{\Sigma_g^n}$ generated by the set
of elements that can be realized by torus-bounding curves or $\Sigma_g^n$-boundary curves.  By Lemma
\ref{lemma:commgen}, the subgroup $\Gamma$ contains the entire kernel of the exact sequence
\eqref{eqn:decomposecommutator}.  Also, the set of elements that can be realized by $\Sigma_g^n$-boundary
curves project to a generating set for the cokernel of \eqref{eqn:decomposecommutator}.  We conclude
that $\Gamma = [\pi_1(\hat{S}),\pi_1(\hat{S})]_{\Sigma_g^n}$, as desired.
\end{proof}

\paragraph{Explicit formulas.}
For $\gamma \in [\pi_1(\hat{S}),\pi_1(\hat{S})]_{\Sigma_g^n}$, denote by $\PtPshHat{\gamma} \in \PPT{S}{\Sigma_g^n}{\beta}$ the associated
element of Torelli.  We will need some explicit formulas for $\PtPshHat{\gamma}$.  These can be obtained as follows.  Assume that
$\gamma \in [\pi_1(\hat{S}),\pi_1(\hat{S})]_{\Sigma_g^n}$ can be realized by a simple closed curve.  We therefore have
the element $\PtPsh{\gamma} \in \PPM{S}{\beta}$, which projects to $\gamma$ under the natural projection 
$\PPM{S}{\beta} \cong \pi_1(U\hat{S}) \rightarrow \pi_1(\hat{S})$.  Examining \eqref{eqn:embedcommutator}, we see that there is a unique
$k \in \Z$ such that $\PtPsh{\gamma} T_{\beta}^k \in \PPT{S}{\Sigma_g^n}{\beta}$.  We then have $\PtPshHat{\gamma} = \PtPsh{\gamma} T_{\beta}^k$.  Using this recipe, one can show that if $\gamma$ can be realized by a torus-bounding curve
or a $\Sigma_g^n$-boundary curve, then $\PtPshHat{\gamma}$ is the product of a $\Sigma_g^n$-separating twist and a 
$\Sigma_g^n$-bounding pair.  For example, consider the subsurface $S$ of $\Sigma_4$ in Figure \ref{figure:explicitformulas}.a.
Let $\gamma$ be the torus-bounding curve depicted in Figure
\ref{figure:explicitformulas}.b.  We then have $\PtPsh{\gamma} = T_{\tilde{\gamma}_1} T_{\tilde{\gamma}_2}^{-1}$
for $\tilde{\gamma}_1$ and $\tilde{\gamma}_2$ as in Figure \ref{figure:explicitformulas}.c.
The mapping class $T_{\tilde{\gamma}_1}$ is a $\Sigma_4$-separating
twist, so $T_{\tilde{\gamma}_1} \in \TorelliRel{S}{\Sigma_4}$.  The mapping class $T_{\tilde{\gamma}_2}$ is {\em not} a $\Sigma_4$-separating twist, but
$T_{\tilde{\gamma}_2} T_{\beta}^{-1}$ is a $\Sigma_4$-bounding pair map, so $T_{\tilde{\gamma}_2} T_{\beta}^{-1} \in \TorelliRel{S}{\Sigma_4}$.
We conclude that $\PtPshHat{\gamma} = T_{\tilde{\gamma}_1} (T_{\beta} T_{\tilde{\gamma}_2}^{-1})$

\Figure{figure:explicitformulas}{ExplicitFormulas}{a. A subsurface $S \cong \Sigma_2^4$ of $\Sigma_4$ and a boundary component $\beta$ of $S$ \CaptionSpace
b. A torus-bounding curve $\gamma \in \pi_1(\hat{S})$ \CaptionSpace
c. We have $\PtPsh{\gamma} = T_{\tilde{\gamma}_1} T_{\tilde{\gamma}_2}^{-1}$ and
$\PtPshHat{\gamma} = T_{\tilde{\gamma}_1} (T_{\beta} T_{\tilde{\gamma}_2}^{-1})$. \CaptionSpace
d. A $\Sigma_4$-bounding curve $\gamma \in \pi_1(\hat{S})$.}

\subsection{The Johnson homomorphism and kernel}
\label{section:johnsonhomomorphism}

The Johnson homomorphism was originally defined in \cite{JohnsonHomo} by examining
the action of the Torelli group on nilpotent truncations of the surface group.  There
are now several alternate definitions (see \cite{JohnsonSurvey}).
We will only need its formal properties.

\paragraph{Characteristic element.}
Let $V \cong \Z^{2g}$ be a free $\Z$-module equipped with a symplectic form.  The {\em characteristic element} 
$\omega_V \in \wedge^2 V$ is $a_1 \wedge b_1 + \cdots + a_g \wedge b_g$, where $\{a_1,b_1,\ldots,a_g,b_g\}$ is
a symplectic basis for $V$.  The element $\omega_V$ is independent of the choice of symplectic basis.

\paragraph{Surfaces with one boundary component.}
Set $H = \HH_1(\Sigma_{g}^1;\Z)$.  The Johnson homomorphism on $\Torelli{\Sigma_g^1}$ is a 
surjective homomorphism $\tau_{\Sigma_g^1} : \Torelli{\Sigma_g^1} \rightarrow \wedge^3 H$.  It
has the following behavior on our generators for $\Torelli{\Sigma_g^1}$.
\begin{compactitem}
\item If $T_x \in \Torelli{\Sigma_g^1}$ is a separating twist, then $\tau_{\Sigma_g^1}(T_x) = 0$.  
\item If $T_y T_z^{-1} \in \Torelli{\Sigma_g^1}$ is a bounding pair map, then
$\tau_{\Sigma_g^1}(T_y T_z^{-1}) = [y] \wedge \omega_V$ where $[y] \in H$ and $\omega_V \in \wedge^2 H$ 
are as follows.
The curves $y \cup z$ divide $\Sigma_g^1$ into two subsurfaces $Q_1$ and $Q_2$.  Order
them so that $\partial \Sigma_g^1 \subset Q_2$.  We then have $Q_1 \cong \Sigma_h^2$ and
$Q_2 \cong \Sigma_{g-h-1}^3$ for some $0<h<g$.  Orienting $y$ so that $Q_1$ lies on its
right, the element $[y] \in \HH_1(\Sigma_g^1;\Z)$ is the homology class of $y$.
Let $Q_1' \subset Q_1$ be a genus $h$ subsurface
with $1$ boundary component.  Thus $V:=\HH_1(Q_1';\Z)$ is a symplectic $\Z$-module with
a characteristic element $\omega_V \in \wedge^2 V$, which we will identify with
an element of $\wedge^2 H$ via the inclusion $\wedge^2 V \subset \wedge^2 H$.  
\end{compactitem}

\begin{remark}
It is instructive to verify that $[y] \wedge \omega_V$ is independent of the choice of $Q_1'$.
\end{remark}

The Johnson kernel $\JKer{\Sigma_g^1}$ is the kernel of $\tau_{\Sigma_g^1}$.

\paragraph{Closed surfaces.}
Continue to let $H = \HH_1(\Sigma_g^1;\Z)$.  Let $\beta$ be the boundary component of
$\Sigma_g^1$ and let $\omega \in \wedge^2 H$ be the characteristic element.  Johnson \cite{JohnsonAbel}
proved the following lemma.  In it, we identify $H = \HH_1(\Sigma_g^1;\Z)$ with $\HH_1(\Sigma_g;\Z)$ in the
obvious way.  Also, we use the inclusion $\PPM{\Sigma_g^1}{\beta} \subset \Torelli{\Sigma_g^1}$ from
Theorem \ref{theorem:birmantorellione}.

\begin{lemma}
\label{lemma:ptpushjohnsonclassical}
The restriction of $\tau_{\Sigma_g^1}$ to the disc-pushing subgroup $\PPM{\Sigma_g^1}{\beta} \cong \pi_1(U\Sigma_g)$
is the composition
\[\pi_1(U\Sigma_g) \longrightarrow \pi_1(\Sigma_g) \longrightarrow H \stackrel{\kappa}{\longrightarrow} \wedge^3 H,\]
where $\kappa : H \rightarrow \wedge^3 H$ is the inclusion $\kappa(h) = h \wedge \omega$.  
\end{lemma}

There is therefore an induced
Johnson homomorphism $\tau_{\Sigma_g} : \Torelli{\Sigma_g} \rightarrow (\wedge^3 H) / H$ which fits
into a commutative diagram of short exact sequences
\begin{equation}
\label{eqn:johnsonhomodiagram}
\begin{CD}
1 @>>> \PPM{\Sigma_g^1}{\beta} @>>>    \Torelli{\Sigma_g^1}    @>>> \Torelli{\Sigma_g}    @>>> 1 \\
@.     @VVV                               @VV{\tau_{\Sigma_g^1}}V      @VV{\tau_{\Sigma_g}}V      @.\\
1 @>>> H                          @>{\kappa}>> \wedge^3 H              @>>> (\wedge^3 H)/H        @>>> 1.
\end{CD}
\end{equation}
The Johnson kernel $\JKer{\Sigma_g}$ is the kernel of $\tau_{\Sigma_g}$.


\section{The subsurface Johnson homomorphisms and kernels}
\label{section:subsurfacejohnson}

Let $S$ be an arbitrary surface and let $S \hookrightarrow \Sigma_g^n$ be an embedding with
$n \leq 1$.  We want to define the Johnson homomorphism on $\TorelliRel{S}{\Sigma_g^n}$.  To
avoid unnecessary technicalities, we will restrict ourselves to {\em clean embeddings}
$S \hookrightarrow \Sigma_g^n$, that is, embeddings such that no components of
$\Sigma_g^n \setminus \Interior(S)$ are discs.  Given a clean embedding $S \hookrightarrow \Sigma_g^n$ with $n \leq 1$,
we define the Johnson homomorphism $\tau_{S,\Sigma_g^n}$ on $\TorelliRel{S}{\Sigma_g^n}$ as follows.
Let $H = \HH_1(\Sigma_g^n;\Z)$.
\begin{compactitem}
\item If $n=1$, then $\tau_{S,\Sigma_g^n}$ is the composition
\[\TorelliRel{S}{\Sigma_g^n} \longrightarrow \Torelli{\Sigma_g^n} \stackrel{\tau_{\Sigma_g^n}}{\longrightarrow} \wedge^3 H.\]
\item If $n=0$, then $\tau_{S,\Sigma_g^n}$ is the composition 
\[\TorelliRel{S}{\Sigma_g^n} \longrightarrow \Torelli{\Sigma_g^n} \stackrel{\tau_{\Sigma_g^n}}{\longrightarrow} (\wedge^3 H)/H.\]
\end{compactitem}
We emphasize that $\tau_{S,\Sigma_g^n}$ is not defined if $S \hookrightarrow \Sigma_g^n$ is not clean (we do this
to avoid complicated and unnecessary special cases in our results).  
We define the Johnson kernel $\JKerRel{S}{\Sigma_g^n}$ to be the kernel of $\tau_{S,\Sigma_g^n}$.

To understand the relationships between different subsurface Johnson homomorphisms, we will
need the following lemma.  In it, the orthogonal complement
is taken with respect to the algebraic intersection pairing.

\begin{lemma}
\label{lemma:linearalgebra}
For some $g \geq 1$, set $H = \HH_1(\Sigma_g)$ and let $\omega \in \wedge^2 H$ be the characteristic element.  Define
$\kappa : H \rightarrow \wedge^3 H$ by $\kappa(h) = h \wedge \omega$.
\begin{compactenum}[(a)]
\item If $x$ is a primitive vector in $H$, then $(\wedge^3 (x^{\perp})) \cap \kappa(H) = \Span{\kappa(x)} \cong \Z$.
\item If $x$ and $y$ are primitive vectors in $H$ with $i_{\Alg}(x,y)$ equal to $0$ or $1$
that span a rank-$2$ summand of $H$, then $(\wedge^3 (\Span{x,y}^{\perp})) \cap \kappa(H) = 0$.
\end{compactenum}
\end{lemma}
\begin{proof}
The proofs of the two conclusions are similar; we will give the details for (a)
first and leave (b) to the reader.  Choose a symplectic basis
$\{a_1,b_1,\ldots,a_g,b_g\}$ for $H$ with $a_1 = x$.  We have $\omega = a_1 \wedge b_1 + \cdots + a_g \wedge b_g$.
Since $H = a_1^{\perp} \oplus \Span{b_1}$, we have
\begin{equation}
\label{eqn:linearalgebrasum}
\wedge^3 H = (\wedge^2 (a_1^{\perp})) \oplus (\wedge^3 (a_1^{\perp})),
\end{equation}
where the inclusion $\wedge^2 (a_1^{\perp}) \rightarrow \wedge^3 H$ of the first factor
takes $\theta \in \wedge^2 (a_1^{\perp})$ to $\theta \wedge b_1$.

Consider $h \in H$.  Write $h = c a_1 + d b_1 + h'$ with $c,d \in \Z$ and $h' \in \Span{a_2,b_2,\ldots,a_g,b_g}$.
Letting $\overline{\omega} = a_2 \wedge b_2 + \cdots + a_g \wedge b_g$, an easy calculation shows that
the expression for $\kappa(h)$ in terms of \eqref{eqn:linearalgebrasum} is
$(d \overline{\omega} + h' \wedge a_1, (c a_1 + h') \wedge \overline{\omega})$.
The first coordinate of this is $0$ if and only if $d = 0$ and $h' = 0$, and the lemma
follows.
\end{proof}

\begin{corollary}
\label{corollary:reducetoclosed}
Let $\Sigma_h^1 \hookrightarrow \Sigma_g$ be a clean embedding.  Then
the natural map $\wedge^3 \HH_1(\Sigma_h^1;\Z) \rightarrow (\wedge^3 \HH_1(\Sigma_g;\Z))/\HH_1(\Sigma_g;\Z)$
is injective.
\end{corollary}
\begin{proof}
Since the embedding is clean, we have $g>h$.  The corollary now follows from the second
conclusion of Lemma \ref{lemma:linearalgebra}.
\end{proof}

If $\Sigma_h^1 \hookrightarrow \Sigma_g^n$ is a clean embedding with $n \leq 1$,
then Lemma \ref{lemma:degeneratetorelli} implies that
$\TorelliRel{\Sigma_h^1}{\Sigma_g^n} = \Torelli{\Sigma_h^1}$.  We therefore have
two different Johnson homomorphisms on $\TorelliRel{\Sigma_h^1}{\Sigma_g^n}$, one whose codomain
is $\wedge^3 \HH_1(\Sigma_h^1;\Z)$ and one whose codomain is either $\wedge^3 \HH_1(\Sigma_g^n;\Z)$
if $n = 1$ or $(\wedge^3 \HH_1(\Sigma_g^n;\Z))/\HH_1(\Sigma_g^n;\Z)$ if $n=0$.  Thankfully
both of these Johnson homomorphisms have the same kernel.

\begin{lemma}
\label{lemma:noambiguity}
Let $\Sigma_h^1 \hookrightarrow \Sigma_g^n$ be a clean embedding with $n \leq 1$.  Set
$H = \HH_1(\Sigma_g^n;\Z)$.
\begin{compactitem}
\item If $n=1$, then $\tau_{\Sigma_h^1,\Sigma_g^n}:\TorelliRel{\Sigma_h^1}{\Sigma_g^n} \rightarrow \wedge^3 H$ factors as
\begin{equation}
\label{eqn:factor1}
\TorelliRel{\Sigma_h^1}{\Sigma_g^n} = \Torelli{\Sigma_h^1} \stackrel{\tau_{\Sigma_h^1}}{\longrightarrow} \wedge^3 \HH_1(\Sigma_h^1;\Z) \longrightarrow \wedge^3 H.
\end{equation}
\item If $n=0$, then $\tau_{\Sigma_h^1,\Sigma_g^n}:\TorelliRel{\Sigma_h^1}{\Sigma_g^n} \rightarrow (\wedge^3 H)/H$ factors as
\[\TorelliRel{\Sigma_h^1}{\Sigma_g^n} = \Torelli{\Sigma_h^1} \stackrel{\tau_{\Sigma_h^1}}{\longrightarrow} \wedge^3 \HH_1(\Sigma_h^1;\Z) \longrightarrow \wedge^3 H \longrightarrow (\wedge^3 H)/H.\]
\end{compactitem}
Also, in both cases we have 
$\JKer{\Sigma_h^1} = \JKerRel{\Sigma_h^1}{\Sigma_g^n}$.
\end{lemma}
\begin{proof}
The function with the factorization \eqref{eqn:factor1} has the same effect on separating
twists and bounding pair maps as $\tau_{\Sigma_h^1,\Sigma_g^n}$.  These generate $\TorelliRel{\Sigma_h^1}{\Sigma_g^n}$, 
so it must equal $\tau_{\Sigma_h^1,\Sigma_g^n}$.  A similar proof works for the case $n=0$.
As for the final claim, for $n=1$ it follows from the fact that the map
$\wedge^3 \HH_1(\Sigma_h^1;\Z) \rightarrow \wedge^3 \HH_1(\Sigma_g^n;\Z)$ is
injective, and for $n=0$ it follows from Corollary \ref{corollary:reducetoclosed}.
\end{proof}


\section{Birman exact sequence for the Johnson kernel}
\label{section:birmanjohnson}

Let $S$ be a surface, let $S \hookrightarrow \Sigma_g^n$ be a clean embedding with $n \leq 1$,
and let $\beta$ be a boundary component of $S$.
Define $\PPK{S}{\Sigma_g^n}{\beta} = \PPM{S}{\beta} \cap \JKerRel{S}{\Sigma_g}$.
In this section, we construct an analogue of the Birman exact sequence
for the Johnson kernel; the ``differences'' between the Johnson kernels
on surfaces with different numbers of boundary components will be encoded
by the disc-pushing subgroups $\PPK{S}{\Sigma_g^n}{\beta}$.
Just like for the Torelli group, there are three cases (corresponding to
Theorems \ref{theorem:birmantorellione}, \ref{theorem:birmantorellisep}, 
and \ref{theorem:birmantorellinosep}).


\subsection{One boundary}
\label{section:birmanjohnsonone}

The following theorem is the analogue for the Johnson kernel of Theorem
\ref{theorem:birmantorellione}.  It is very close to results of
Johnson from \cite{JohnsonKg}; see in particular \cite[\S 4]{JohnsonKg}.

\begin{theorem}
\label{theorem:birmanjohnsonone}
Let $S \hookrightarrow \Sigma_g^n$ be a clean embedding with $n \leq 1$ and let $\beta$ be a boundary
component of $S$.  Assume that the genus of $S$ is at least $2$ and that $\beta$ is the only
boundary component of $S$. Let $\hat{S}$ be the result of gluing a disc to $\beta$.  Then
there is an exact sequence
\[1 \longrightarrow \PPK{S}{\Sigma_g^n}{\beta} \longrightarrow \JKerRel{S}{\Sigma_g^n} \longrightarrow \JKer{\hat{S}} \longrightarrow 1.\]
Also, $\PPK{S}{\Sigma_g^n}{\beta} \subset \PPM{S}{\beta}$ is the pullback of $[\pi_1(\hat{S}),\pi_1(\hat{S})]$
under the map
\[\PPM{S}{\beta} \cong \pi_1(U\hat{S}) \longrightarrow \pi_1(\hat{S}).\]
\end{theorem}
\begin{proof}
By Lemma \ref{lemma:noambiguity}, we have $\JKerRel{S}{\Sigma_g^n} = \JKer{S}$.  The Johnson homomorphism
on a closed surface was defined precisely so that the surjection $\Torelli{S} \rightarrow \Torelli{\hat{S}}$
from Theorem \ref{theorem:birmantorellione} restricts to a surjection $\JKer{S} \rightarrow \JKer{\hat{S}}$ (see
the commutative diagram \eqref{eqn:johnsonhomodiagram}).  The indicated exact sequence follows.  As for the identification
of $\PPK{S}{\Sigma_g^n}{\beta}$, Theorem \ref{theorem:birmantorellione} says that
$\PPT{S}{\Sigma_g^n}{\beta} = \PPM{S}{\beta}$.  It then follows from the aforementioned identity
$\JKerRel{S}{\Sigma_g^n} = \JKer{S}$ that $\PPK{S}{\Sigma_g^n}{\beta} \subset \PPM{S}{\beta}$ is the kernel of 
the restriction of $\tau_S$ to $\PPM{S}{\beta}$.  Letting $H = \HH_1(\hat{S};\Z)$ and letting $\omega \in \wedge^2 H$
be the characteristic element, Lemma \ref{lemma:ptpushjohnsonclassical} says that this
restriction equals the composition
\[\PPM{S}{\beta} \cong \pi_1(U\hat{S}) \longrightarrow \pi_1(\hat{S}) \longrightarrow H \stackrel{\kappa}{\longrightarrow} \wedge^3 H,\]
where $\kappa: H \rightarrow \wedge^3 H$ is the map $\kappa(h) = h \wedge \omega$.  The map $\kappa$ is injective, so we conclude
that $\PPK{S}{\Sigma_g^n}{\beta} \subset \PPM{S}{\beta}$ is the kernel of the composition
\[\PPM{S}{\beta} \cong \pi_1(U\hat{S}) \longrightarrow \pi_1(\hat{S}) \longrightarrow H,\]
as desired.
\end{proof}


\subsection{Separating boundary}
\label{section:birmanjohnsonsep}

The following theorem is the analogue
for the Johnson kernel of Theorem \ref{theorem:birmantorellisep}.

\begin{theorem}
\label{theorem:birmanjohnsonsep}
Let $S \hookrightarrow \Sigma_g^n$ be a clean embedding with $n \leq 1$ and let $\beta$ be a boundary
component of $S$.  Assume that the genus of $S$ is at least $2$, that $S$ has more than
one boundary component, and that $\beta$ separates $\Sigma_g^n$.
Let $S' \hookrightarrow S$ be a splitting surface for $\beta$ and let $\hat{S}$ be
the result of gluing a disc to $\beta$.  Then if all the boundary components of $S$ separate
$\Sigma_g^n$, we have
\[\JKerRel{S}{\Sigma_g^n} = \PPK{S}{\Sigma_g^n}{\beta} \rtimes \JKerRel{S'}{\Sigma_g^n}.\]
Also, even if the boundary components of $S$ other than $\beta$ do not separate $\Sigma_g^n$,
the group $\PPK{S}{\Sigma_g^n}{\beta}$ is the pullback
of $[\pi_1(\hat{S}),\pi_1(\hat{S})]_{\Sigma_g^n}$ under the map
\[\PPT{S}{\Sigma_g^n}{\beta} \cong \pi_1(U\hat{S}) \rightarrow \pi_1(\hat{S}).\]
\end{theorem}

\begin{remark}
The condition that all the boundary components of $S$ separate $\Sigma_g^n$ might seem
quite restrictive.  However, when we use the Birman exact sequence for the Johnson kernel,
the first step will be to cap off nonseparating boundary components
one at a time (using Theorem \ref{theorem:birmanjohnsonnosep} below) until only separating boundary
components remain, so Theorem \ref{theorem:birmanjohnsonsep} suffices for our purposes.
There are subtleties involved in formulating the correct statement without this condition.
For example, if $S$ and $S'$ and $x$ and $y$ and
$z$ are as in Figure \ref{figure:johnsonbirmansep}.a, then $T_x T_y^{-1} \in \PPT{S}{\Sigma_g}{\beta} \setminus \JKerRel{S}{\Sigma_g}$
and $T_y T_z^{-1} \in \TorelliRel{S'}{\Sigma_g} \setminus \JKerRel{S'}{\Sigma_g}$, 
but $(T_x T_y^{-1})(T_y T_z^{-1}) = T_x T_z^{-1} \in \JKerRel{S}{\Sigma_g}$.
\end{remark}

\Figure{figure:johnsonbirmansep}{BirmanJohnsonSep}
{a. $S$ is the subsurface to the right of $\beta$ and below $x$ and $z$.  Considered as curves in $S$, the curves
$x$ and $y$ become isotopic when a disc is glued to $\beta$, so the $\Sigma_4$-bounding pair map $T_x T_y^{-1}$ lies
in $\PPT{S}{\Sigma_4}{\beta}$.  
\CaptionSpace
b. The subsurfaces from the last part of the proof of Theorem \ref{theorem:birmanjohnsonsep}.}

The proof of Theorem \ref{theorem:birmanjohnsonsep} requires the following lemma.

\begin{lemma}
\label{lemma:semidirect}
Let $f:G_1 \rightarrow G_2$ be a group homomorphism.  Assume that
$G_1 = K \rtimes Q$.  Then $\ker(f) = \ker(f|_K) \rtimes \ker(f|_Q)$ if and only if $f(K) \cap f(Q) = 1$.
\end{lemma}
\begin{proof}
Trivial.
\end{proof}

\begin{proof}[{Proof of Theorem \ref{theorem:birmanjohnsonsep}}]
If $n=1$, then by 
Lemmas \ref{lemma:degeneratetorelli}(b) and \ref{lemma:relativecommutatorunchanged},
we can replace $\Sigma_g^n$ with $\Sigma_{g+1}$ without changing the truth
of the theorem, so without loss of generality we can assume that $n=0$.  Next,
another application of
Lemmas \ref{lemma:degeneratetorelli}(b) and \ref{lemma:relativecommutatorunchanged}
shows that we can replace $\Sigma_g$ with the component of $\Sigma_g$ cut along
$\beta$ that contains $S$ without changing the truth of the theorem, so
without loss of generality we can assume that $n=1$ and that $\beta = \partial \Sigma_g^n$.

Set $H = \HH_1(\Sigma_g^1;\Z)$ and let $\omega \in \wedge^2 H$ be the
characteristic element.  Define $\kappa:H \rightarrow \wedge^3 H$ via the formula
$\kappa(h) = h \wedge \omega$.  Theorem \ref{theorem:birmantorellisep} says that
$\PPT{S}{\Sigma_g^1}{\beta} = \PPM{S}{\beta} \cong \pi_1(U\hat{S})$.
By Lemma \ref{lemma:ptpushjohnsonclassical}, the restriction
of $\tau_{S,\Sigma_g^1}$ to $\PPM{S}{\beta}$ equals the composition
\[\PPM{S}{\beta} \hookrightarrow \PPM{\Sigma_g^1}{\beta} \cong \pi_1(U\Sigma_g) \rightarrow \pi_1(\Sigma_g) \rightarrow H \stackrel{\kappa}{\longrightarrow} \wedge^3 H,\]
where we are identifying $H$ with $\HH_1(\Sigma_g;\Z)$ in the obvious way.  This
composition equals
\begin{equation}
\label{eqn:factorjohnsonsep}
\PPM{S}{\beta} \cong \pi_1(U\hat{S}) \rightarrow \pi_1(\hat{S}) \rightarrow \HH_1(\hat{S};\Z) \stackrel{\sigma_{S,\Sigma_g^1,\beta}}{\longrightarrow} H \stackrel{\kappa}{\longrightarrow} \wedge^3 H.
\end{equation}
Since $\kappa$ is injective, we see that the kernel of the restriction of
$\tau_{S,\Sigma_g^1}$ to $\PPM{S}{\beta}$ equals the kernel of the composition
\[\PPM{S}{\beta} \cong \pi_1(U\hat{S}) \rightarrow \pi_1(\hat{S}) \rightarrow \HH_1(\hat{S};\Z) \stackrel{\sigma_{S,\Sigma_g^1,\beta}}{\longrightarrow} H.\]
By definition, this is the pullback to $\PPM{S}{\beta}$ of 
$[\pi_1(\hat{S}),\pi_1(\hat{S})]_{\Sigma_g^1}$, as claimed.

It remains to prove that if all the boundary components of $S$ separate $\Sigma_g^1$, then
\[\JKerRel{S}{\Sigma_g^1} = \PPK{S}{\Sigma_g^1}{\beta} \rtimes \JKerRel{S'}{\Sigma_g^1}.\]
By Theorem \ref{theorem:birmantorellisep}, we have
\[\TorelliRel{S}{\Sigma_g^1} = \PPT{S}{\Sigma_g^1}{\beta} \rtimes \TorelliRel{S'}{\Sigma_g^1}.\]
Lemma \ref{lemma:semidirect} thus implies that it is enough to prove that
\begin{equation}
\label{eqn:birmantorelliseptoprove}
\tau_{S,\Sigma_g^1}(\PPT{S}{\Sigma_g^1}{\beta}) \cap \tau_{S,\Sigma_g^1}(\TorelliRel{S'}{\Sigma_g^1}) = 0.
\end{equation}
Let $\{\beta,\beta',\beta''\}$ be the boundary components of $S \setminus \Interior(S')$,
ordered such that $\beta'' = S \cap S'$.  Let $M$ be the component
of $\Sigma_g^1$ cut along $\beta'$ that does not contain $S$.  Also,
let $S''$ be the component of $\Sigma_g^1$ cut along $\beta''$ that contains
$S'$ (see Figure \ref{figure:johnsonbirmansep}(b)).  Using our assumption
that all the boundary components of $S$ separate $\Sigma_g^1$, we see that
$M$ and $S''$ are disjoint surfaces with one boundary component whose
genera are positive.  We can thus find oriented simple closed curves
$x$ and $y$ in $M$ that intersect once, and
$\HH_1(S'';\Z) \subset \langle [x], [y] \rangle^{\perp}$.
Lemma \ref{lemma:linearalgebra}(b) therefore implies that
\[\left(H \wedge \omega\right) \cap \left(\wedge^3 \HH_1(S'';\Z)\right) = 0.\]
Lemma \ref{lemma:noambiguity} implies that
$\tau_{S,\Sigma_g^1}(\TorelliRel{S'}{\Sigma_g^1}) \subset \wedge^3 \HH_1(S'';\Z)$,
and it is immediate from \eqref{eqn:factorjohnsonsep} that
$\tau_{S,\Sigma_g^1}(\PPT{S}{\Sigma_g^1}{\beta}) \subset H \wedge \omega$.
The desired equation \eqref{eqn:birmantorelliseptoprove} follows.
\end{proof}


\subsection{Nonseparating boundary}
\label{section:birmanjohnsonnosep}

The following theorem is the analogue
for the Johnson kernel of Theorem \ref{theorem:birmantorellinosep}.

\begin{theorem}
\label{theorem:birmanjohnsonnosep}
Let $S \hookrightarrow \Sigma_g^n$ be a clean embedding with $n \leq 1$ and let $\beta$ be a boundary
component of $S$.  Assume that the genus of $S$ is at least $2$ and
that $\beta$ is a nonseparating curve on $\Sigma_g^n$.
Let $S' \hookrightarrow S$ be a $\Sigma_g^n$-splitting surface for $\beta$ and let $\hat{S}$ be
the result of gluing a disc to $\beta$.  Then
\[\JKerRel{S}{\Sigma_g^n} = \PPK{S}{\Sigma_g^n}{\beta} \rtimes \JKerRel{S'}{\Sigma_g^n}.\]
Also, the following hold.  Let $C$ be the component of $\Sigma_g^n \setminus \Interior(S)$ adjacent to $\beta$.
\begin{compactitem}
\item If $C$ is not an annulus, then
$\PPK{S}{\Sigma_g^n}{\beta} = [\pi,[\pi,\pi]_{\Sigma_g^n}] \subset [\pi,\pi]_{\Sigma_g^n}$.
\item If $C$ is an annulus and $\beta' \subset S$ is its other boundary component,
then $\PPK{S}{\Sigma_g^n}{\beta}$ is generated by $[\pi,[\pi,\pi]_{\Sigma_g^n}]$ together with
a simple closed curve $\delta \in [\pi,\pi]_{\Sigma_g^n}$ which is freely homotopic to the component of $\partial \hat{S}$ corresponding
to $\beta'$ (see Figures \ref{figure:johnsondiscnosep1}.a--b).
\end{compactitem}
\end{theorem}

\Figure{figure:johnsondiscnosep1}{DiscJohnsonNosep1}{
a--c. Why an extra generator is needed in Theorem \ref{theorem:birmanjohnsonnosep} if $C$ is an annulus \CaptionSpace
d. $S \cong \Sigma_2^{8}$ is the subsurface in the ``middle'' of $\Sigma_8$.}

\begin{remark}
The reason for the additional generator of $\PPK{S}{\Sigma_g^n}{\beta}$ when $C$ is an annulus with boundary
components $\beta$ and $\beta'$ is as follows.
As shown in Figure \ref{figure:johnsondiscnosep1}.a--c, if $\delta \in [\pi,\pi]_{\Sigma_g^n}$
is a simple closed curve freely homotopic to $\beta'$, then using the explicit formula from \S \ref{section:torellibasic}
we have (up to signs) $\PtPshHat{\delta} = T_{\beta} T_{\beta'}^{-1} T_{\epsilon}$, where $T_{\epsilon}$ is a $\Sigma_g^n$-separating
twist.  Thus
\[\tau_{S,\Sigma_g^n}(\PtPshHat{\delta}) = \tau_{S,\Sigma_g^n}(T_{\beta} T_{\beta'}^{-1}) + \tau_{S,\Sigma_g^n}(T_{\epsilon}) = \tau_{S,\Sigma_g^n}(T_{\beta} T_{\beta'}^{-1}).\]
However, $\beta$ and $\beta'$ are isotopic in $\Sigma_g^n$, so $T_{\beta} T_{\beta'}^{-1}$ is in the kernel of the map
$\TorelliRel{S}{\Sigma_g^n} \rightarrow \Torelli{\Sigma_g^n}$ and thus certainly in the kernel of $\tau_{S,\Sigma_g^n}$.
\end{remark}

\begin{proof}[{Proof of Theorem \ref{theorem:birmanjohnsonnosep}}]
If $n=1$, then by
Lemmas \ref{lemma:degeneratetorelli}(b) and \ref{lemma:relativecommutatorunchanged},
we can replace $\Sigma_g^n$ with $\Sigma_{g+1}$ without changing the truth
of the theorem, so without loss of generality we can assume that $n=0$.

The first thing we must do is to construct an analogue for our situation of the factorization in
Lemma \ref{lemma:ptpushjohnsonclassical}.  To simplify our notation, we make the following definitions.
\begin{compactitem}
\item $H = \HH_1(\Sigma_g;\Z)$.
\item $\pi = \pi_1(\hat{S})$.
\item $\sigma = \sigma_{S,\Sigma_g,\beta}$ (this is the subsurface homology map defined in
\S \ref{section:birmantorelli}).
\end{compactitem}
Let $V \subset H/\Span{[\beta]}$ be the image of 
$\sigma: \HH_1(\hat{S};\Z) \rightarrow H / \Span{[\beta]}$ and let
$K = \ker(\sigma)$, so we have a short exact sequence
\[0 \longrightarrow K \longrightarrow \HH_1(\hat{S};\Z) \stackrel{\sigma}{\longrightarrow} V \longrightarrow 0.\]
Let the connected components of $\Sigma_g \setminus \Interior(S)$ be $X_0,\ldots,X_r$, where
$X_0$ is the component containing $\beta$.  Orienting the connected components of $\partial X_i \subset S$
in the usual way, the $\Sigma_g$-homology class of $\partial X_i$ is $0$, so we obtain an element
$[\partial X_i]' \in \ker(\HH_1(S;\Z) \rightarrow \HH_1(\Sigma_g;\Z))$.  Let $[\partial X_i] \in K$
be the image of $[\partial X_i]' \in \HH_1(S;\Z)$.  Then $K$ is generated by $[\partial X_0],\ldots,[\partial X_r]$,
and the only relation between the $[\partial X_i]$ is that $[\partial X_0] + \cdots + [\partial X_r] = 0$ (see
Figure \ref{figure:johnsondiscnosep1}.d).
The upshot is that $K \cong \Z^r$ with basis $\{[\partial X_1],\ldots,[\partial X_r]\}$.  We
now choose $\gamma_1,\ldots,\gamma_r \in \pi$ with the following properties
(see Figure \ref{figure:johnsondiscnosep1}.d).  
\begin{compactitem}
\item For $1 \leq i \leq r$, the curve $\gamma_i$ is a $\Sigma_g$-bounding curve surrounding $X_i$ (we recall that
$\Sigma_g$-bounding curves and torus-bounding curves were defined towards the end of \S \ref{section:torellibasic}),
oriented such that $[\gamma_i] = [\partial X_i]$.
\item For $1 \leq i \leq r$, the curve $\gamma_i$ is freely homotopic to an unbased simple closed
curve $\eta_i$ such that the curves $\{\eta_1,\ldots,\eta_r\}$ are pairwise disjoint.
\end{compactitem}
Observe that $\gamma_i \in [\pi,\pi]_{\Sigma_g}$.  We then have the following.

\BeginClaims
\begin{claims}
\label{claim:buildphi}
There is a surjective homomorphism $\phi: [\pi,\pi]_{\Sigma_g} \rightarrow K \oplus \wedge^2 V$ with the following
properties.
\begin{compactitem}
\item For $1 \leq i \leq r$, we have $\phi(\gamma_i) = [\partial X_i] \in K$.
\item For $x,y \in \pi$, we have $\phi([x,y]) = \sigma([x]) \wedge \sigma([y])$, where $[x],[y] \in \HH_1(\hat{S};\Z)$
are the associated homology classes.
\item $\ker(\phi) = [\pi,[\pi,\pi]_{\Sigma_g}]$.
\end{compactitem}
\end{claims}
\begin{proof}[Proof of claim]
We have a short exact sequence
\[1 \longrightarrow [\pi,\pi]_{\Sigma_g} \longrightarrow \pi \longrightarrow V \longrightarrow 1.\]
Associated to this is a $5$-term exact sequence in group homology (see \cite[Corollary VII.6.4]{BrownCohomology}).  
The group $\pi$ is free, so $\HH_2(\pi;\Z) = 0$.  Also, $\HH_2(V;\Z) \cong \wedge^2 V$.  
The $5$-term sequence is thus of the form
\[0 \longrightarrow \wedge^2 V \longrightarrow (\HH_1([\pi,\pi]_{\Sigma_g};\Z))_{\pi} \longrightarrow \HH_1(\pi;\Z) \longrightarrow V \longrightarrow 0.\]
Here $(\HH_1([\pi,\pi]_{\Sigma_g};\Z))_{\pi}$ are the coinvariants of the action of $\pi$ on $\HH_1([\pi,\pi]_{\Sigma_g};\Z)$,
i.e.\ the largest quotient of $\HH_1([\pi,\pi]_{\Sigma_g};\Z)$
on which the action of $\pi$ induced by the conjugation action of $\pi$ on $[\pi,\pi]_{\Sigma_g}$ is trivial.
Chasing through the definitions,
we have $(\HH_1([\pi,\pi]_{\Sigma_g};\Z))_{\pi} = [\pi,\pi]_{\Sigma_g}/[\pi,[\pi,\pi]_{\Sigma_g}]$.  Also, the kernel
of the map $\HH_1(\pi;\Z) \rightarrow V$ is $K$.  We therefore obtain a short exact sequence
\[0 \longrightarrow \wedge^2 V \longrightarrow [\pi,\pi]_{\Sigma_g}/[\pi,[\pi,\pi]_{\Sigma_g}] \longrightarrow K \longrightarrow 0.\]
Since $K$ is a free abelian group, this exact sequence splits; in fact, there is a splitting that takes $[\partial X_i] \in K$ 
to the image
of $\gamma_i \in [\pi,\pi]_{\Sigma_g}$ in $[\pi,\pi]_{\Sigma_g}/[\pi,[\pi,\pi]_{\Sigma_g}]$.  From this splitting, we get an isomorphism
\[\phi' : [\pi,\pi]_{\Sigma_g}/[\pi,[\pi,\pi]_{\Sigma_g}] \stackrel{\cong}{\longrightarrow} K \oplus \wedge^2 V.\]
The desired map $\phi : [\pi,\pi]_{\Sigma_g} \rightarrow K \oplus \wedge^2 V$ is then the composition of $\phi'$ with the projection
$[\pi,\pi]_{\Sigma_g} \rightarrow [\pi,\pi]_{\Sigma_g}/[\pi,[\pi,\pi]_{\Sigma_g}]$.
\end{proof}

For $1 \leq i \leq r$, let $Y_i$ be the component of $\Sigma_g$ cut along $\eta_i$ that contains $X_i$.  Thus
$Y_i$ is a surface with one boundary component and we have an injection $\HH_1(Y_i;\Z) \hookrightarrow \HH_1(\Sigma_g;\Z)$;
let $\omega_i \in \wedge^2 H$ be the image of the characteristic element of $\wedge^2 \HH_1(Y_i;\Z)$.  
Define a homomorphism $\psi_1:K \rightarrow \wedge^3 H$ via the formula $\psi_1([\partial X_i]) = [\beta] \wedge \omega_i$.
Also, define a homomorphism $\psi_2: \wedge^2 V \rightarrow \wedge^3 H$ via the formula
$\psi_2(v_1 \wedge v_2) = [\beta] \wedge \tilde{v}_1 \wedge \tilde{v}_2$, where $\tilde{v}_i \in H$ is any lift
of $v_i \in V \subset H/\Span{[\beta]}$ (observe that this is well-defined).  Finally, define
a homomorphism $\psi: K \oplus \wedge^2 V \rightarrow \wedge^3 H$ via the formula 
$\psi(k,v_1 \wedge v_2) = \psi_1(k) + \psi_2(v_1 \wedge v_2)$.  We 
then have the following, which is the analogue for our situation of Lemma \ref{lemma:ptpushjohnsonclassical}.
The first isomorphism in this claim follows from Theorem \ref{theorem:birmantorellinosep}.

\begin{claims}
\label{claim:factortau}
The restriction of $\tau_{S,\Sigma_g}:\TorelliRel{S}{\Sigma_g} \rightarrow (\wedge^3 H)/H$ to $\PPT{S}{\Sigma_g}{\beta}$
factors as
\[\PPT{S}{\Sigma_g}{\beta} \cong [\pi,\pi]_{\Sigma_g} \stackrel{\phi}{\longrightarrow} K \oplus \wedge^2 V \stackrel{\psi}{\longrightarrow} \wedge^3 H \longrightarrow (\wedge^3 H)/H.\]
\end{claims}
\begin{proof}[Proof of claim]
Let $\nu : [\pi,\pi]_{\Sigma_g} \rightarrow (\wedge^3 H)/H$ be the indicated composition.
The group $[\pi,\pi]_{\Sigma_g}$ is generated by $[\pi,\pi]$ together with $\gamma_1,\ldots,\gamma_{r}$, and
Lemma \ref{lemma:commgen} says that $[\pi,\pi]$ is generated by torus-bounding curves.
It is enough therefore to prove that $\nu(\lambda) = \tau_{S,\Sigma_g}(\PtPshHat{\lambda})$ for
$\lambda$ either one of the $\gamma_i$ or a torus-bounding curve.  We will give
the details for the latter case; the former can be handled similarly.

\Figure{figure:johnsondiscnosep2}{DiscJohnsonNosep2}{
a. $S$ is a subsurface of $\Sigma_g$ and $\beta$ is a boundary component of $S$ that does not separate $\Sigma_g$ \CaptionSpace
b. $\lambda \in \pi$ is a torus-bounding curve \CaptionSpace
c. $\PtPshHat{\lambda} = T_{\tilde{\lambda}_1} (T_{\beta} T_{\tilde{\lambda}_2}^{-1})$ \CaptionSpace
d. $\tau_{S,\Sigma_g}(T_{\tilde{\lambda}_1} (T_{\beta} T_{\tilde{\lambda}_2}^{-1})) = \tau_{S,\Sigma_g}(T_{\tilde{\lambda}_1}) + \tau_{S,\Sigma_g}(T_{\beta} T_{\tilde{\lambda}_2}^{-1}) = 0 + [\beta] \wedge [A] \wedge [B] = [\beta] \wedge [A] \wedge [B]$ \CaptionSpace
e. $\lambda = [a,b]$}

Let $\lambda \in [\pi,\pi]$ be torus-bounding curve.  Replacing
$\lambda$ by its inverse if necessary, we can assume that the one-holed torus lies on its right; see Figures \ref{figure:johnsondiscnosep2}.a--b.
Letting $\tilde{\lambda}_1$ and $\tilde{\lambda}_2$ be the curves depicted in Figure \ref{figure:johnsondiscnosep2}.c, the explicit
formulas in \S \ref{section:torellibasic} show that $\PtPshHat{\lambda} = T_{\tilde{\lambda}_1} (T_{\beta} T_{\tilde{\lambda}_2}^{-1})$.  Letting
$A$ and $B$ be the curves shown in Figure \ref{figure:johnsondiscnosep2}.d, the formulas in \S \ref{section:johnsonhomomorphism}
show that $\tau_{S,\Sigma_g}(\PtPshHat{\lambda})$ equals
\[\tau_{S,\Sigma_g}(T_{\tilde{\lambda}_1}) + \tau_{S,\Sigma_g}(T_{\beta} T_{\tilde{\lambda}_2}^{-1}) = 0 + [\beta] \wedge [A] \wedge [B] = [\beta] \wedge [A] \wedge [B] \in (\wedge^3 H)/H\]
Letting $a,b \in \pi$ be the curves in Figure \ref{figure:johnsondiscnosep2}.e, we have $\lambda = [a,b]$.  Regarding $A$ and $B$
as curves in $\hat{S}$ via the inclusion $S \hookrightarrow \hat{S}$, the curve $a$ is freely homotopic to $A$ and the curve $b$
is freely homotopic to $B$.  Tracing through the definitions, we see that $\nu(\lambda) = [\beta] \wedge [A] \wedge [B] \in (\wedge^3 H)/H$, as desired.
\end{proof}

We are now in a position to prove the first assertion of the theorem, namely the following claim.

\Figure{figure:johnsonbirmannosep}{BirmanJohnsonNosep}
{a. $S$ is the subsurface to the right of the three ``vertical'' curves.  $S'$ is a $\Sigma_6$-splitting surface for $S$. \CaptionSpace
b. $\mathcal{A} \subset \Sigma_6 \setminus \Interior(S)$ is an annulus one of whose boundary components is $\beta$.  We have
$M = \Sigma_6 \setminus \Interior(\mathcal{A})$, and $M' \subset M$ is a $\Sigma_6$-splitting surface for $\beta$ satisfying $S' \subset M'$.}

\begin{claims}
\label{claim:semidirect}
We have $\JKerRel{S}{\Sigma_g} = \PPK{S}{\Sigma_g}{\beta} \rtimes \JKerRel{S'}{\Sigma_g}$.
\end{claims}
\begin{proof}[Proof of claim]
Theorem \ref{theorem:birmantorellinosep} implies that
\[\TorelliRel{S}{\Sigma_g} = \PPT{S}{\Sigma_g}{\beta} \rtimes \TorelliRel{S'}{\Sigma_g}.\]
By Lemma \ref{lemma:semidirect}, the claim is equivalent to
\[\tau_{S,\Sigma_g}(\PPT{S}{\Sigma_g}{\beta}) \cap \tau_{S,\Sigma_g}(\TorelliRel{S'}{\Sigma_g}) = 0,\]
which we now prove.
Let $\mathcal{A}$ be an annulus in $\Sigma_g \setminus \Interior(S)$ one of whose boundary components is $\beta$ and let $M = \Sigma_g \setminus \Interior(\mathcal{A})$.  We
have $S \subset M$, and we can choose a $\Sigma_g$-splitting surface $M'$ for $\beta$ in $M$ such that $S' \subset M'$ (see Figure \ref{figure:johnsonbirmannosep}).  We have
$$\tau_{S,\Sigma_g}(\PPT{S}{\Sigma_g}{\beta}) \subset \tau_{M,\Sigma_g}(\PPT{M}{\Sigma_g}{\beta}) \quad \text{and} \quad \tau_{S,\Sigma_g}(\TorelliRel{S'}{\Sigma_g}) \subset \tau_{M,\Sigma_g}(\TorelliRel{M'}{\Sigma_g}),$$
so it suffices to prove that
\begin{equation}
\label{eqn:noseptoprove2}
\tau_{M,\Sigma_g}(\PPT{M}{\Sigma_g}{\beta}) \cap \tau_{M,\Sigma_g}(\TorelliRel{M'}{\Sigma_g}) = 0.
\end{equation}

Set $H_{M'} = \HH_1(M';\Z) \subset H$.  Let $\rho : \wedge^3 H \rightarrow (\wedge^3 H)/H$
be the projection.
Claim \ref{claim:factortau} implies that
\[\tau_{M,\Sigma_g}(\PPT{M}{\Sigma_g}{\beta}) = \rho([\beta] \wedge (\wedge^2 H_{M'})) \subset (\wedge^3 H)/H.\]
Also, Lemma \ref{lemma:noambiguity} implies that
\[\tau_{M,\Sigma_g}(\TorelliRel{M'}{\Sigma_g}) = \rho(\wedge^3 H_{M'}) \subset (\wedge^3 H)/H.\]
Equation \eqref{eqn:noseptoprove2} therefore asserts that
\begin{equation}
\label{eqn:noseptoprove3}
\rho([\beta] \wedge (\wedge^2 H_{M'})) \cap  \rho(\wedge^3 H_{M'}) = 0.
\end{equation}
We have that $([\beta] \wedge (\wedge^2 H_{M'})) \cap \wedge^3 H_{M'} = 0$ (as subsets of
$\wedge^3 H$).  In fact, we have
\begin{equation}
\label{eqn:directsum1}
\wedge^3 [\beta]^{\perp} = ([\beta] \wedge (\wedge^2 H_{M'})) \oplus \wedge^3 H_{M'};
\end{equation}
here the orthogonal complement is taken with respect to the intersection form.  The first
conclusion of Lemma \ref{lemma:linearalgebra} says that the kernel of $\rho|_{\wedge^3 [\beta]^{\perp}}$
lies entirely in $[\beta] \wedge (\wedge^2 H_{M'})$.  Therefore, the decomposition
\eqref{eqn:directsum1} projects to
\begin{equation}
\label{eqn:remember}
\rho(\wedge^3 [\beta]^{\perp}) = \rho([\beta] \wedge (\wedge^2 H_{M'})) \oplus \rho(\wedge^3 H_{M'}).
\end{equation}
In particular, \eqref{eqn:noseptoprove3} holds.
\end{proof}

For the second assertion of the theorem (concerning $\PPK{S}{\Sigma_g}{\beta}$), we need the following.

\begin{claims}
\label{claim:psiinjective}
The homomorphism $\psi$ is injective.
\end{claims}
\begin{proof}[Proof of claim]
The map $\psi|_{\wedge^2 V}$ is injective with image $[\beta] \wedge (\wedge^2 V)$.
To prove that $\psi$ itself is injective, it is enough to prove that
the elements $[\beta] \wedge \omega_1,\ldots,[\beta] \wedge \omega_{r} \in \wedge^3 H$ are linearly
independent modulo $[\beta] \wedge (\wedge^2 V)$.

Recall that $\omega_i$ is the characteristic element of $\wedge^2 \HH_1(Y_i;\Z)$, where $Y_i$ is the subsurface
of $\Sigma_g$ to the right of $\eta_i$.  Choose
a symplectic basis $B$ for $H$ with the following properties.
\begin{compactitem}
\item For $1 \leq i \leq r$, there is some subset of $B$ forming a symplectic basis for $\HH_1(Y_i;\Z)$.
\item $[\beta] \in B$.
\item There is a subset of $B$ that projects to a basis for $V \subset H / \Span{[\beta]}$.
\end{compactitem}
Choose a total ordering $\prec$ on $B$ such that $[\beta]$ is a minimal element.  The set
$B_{\wedge} = \{\text{$x \wedge y \wedge z$ $|$ $x,y \in B$, $x \prec y \prec z$}\}$ is a basis for $\wedge^3 H$.
Define
\[B_{\wedge}' = \{\text{$[\beta] \wedge y \wedge z$ $|$ $y,z \in B$, $[\beta] \prec y \prec z$}\} \subset B_{\wedge}.\]
By assumption, there is a subset $B_{\wedge}''$ of $B_{\wedge}'$ which forms a basis for $[\beta] \wedge (\wedge^2 V)$.
Also, for all $1 \leq i \leq r$, the element $[\beta] \wedge \omega_i$ is in the span of $B_{\wedge}'$.
Finally, for all $1 \leq i \leq r$, there exists some $v_i \in B_{\wedge}'$ with the following properties.
\begin{compactitem}
\item In the expansion of $[\beta] \wedge \omega_i$ in the basis $B_{\wedge}'$, the
coordinate of $v_i$ is nonzero.
\item $v_i \notin B_{\wedge}''$.
\item $v_i \neq v_j$ for $i \neq j$.
\end{compactitem}
It follows that the elements $[\beta] \wedge \omega_i$ are linearly independent modulo $[\beta] \wedge (\wedge^2 V)$, 
and we are done.
\end{proof}

The following claim is the remaining part of the theorem

\begin{claims}
\label{claim:generatepush}
Let $C$ be the component of $\Sigma_g^n \setminus \Interior(S)$ adjacent to $\beta$.
\begin{compactitem}
\item If $C$ is not an annulus, then
$\PPK{S}{\Sigma_g^n}{\beta} = [\pi,[\pi,\pi]_{\Sigma_g^n}] \subset [\pi,\pi]_{\Sigma_g^n}$.
\item If $C$ is an annulus and $\beta' \subset S$ is its other boundary component,
then $\PPK{S}{\Sigma_g^n}{\beta}$ is generated by $[\pi,[\pi,\pi]_{\Sigma_g^n}]$ together with
a simple closed curve $\delta \in [\pi,\pi]_{\Sigma_g^n}$ which is freely homotopic to the component of $\partial \hat{S}$ corresponding
to $\beta'$ (see Figures \ref{figure:johnsondiscnosep1}.a--b).
\end{compactitem}
\end{claims}
\begin{proof}[Proof of claim]
Let $\tau_1 : \PPT{S}{\Sigma_g}{\beta} \rightarrow \wedge^3 H$ be the composition
\[\PPT{S}{\Sigma_g}{\beta} \cong [\pi,\pi]_{\Sigma_g} \stackrel{\phi}{\longrightarrow} K \oplus \wedge^2 V \stackrel{\psi}{\longrightarrow} \wedge^3 H\]
and let $\tau_2 : \wedge^3 H \rightarrow (\wedge^3 H)/H$ be the projection.  Claim \ref{claim:factortau} 
says that $\tau_{S,\Sigma_g}|_{\PPT{S}{\Sigma_g}{\beta}} = \tau_2 \circ \tau_1$.
Claims \ref{claim:buildphi} and \ref{claim:psiinjective} show that $\Ker(\tau_1) = [\pi,[\pi,\pi]_{\Sigma_g}]$. 

If the component $C$ of $\Sigma_g \setminus \Interior(S)$
adjacent to $\beta$ is not an annulus, then we can find a simple closed curve $\zeta$ in $C$ such that
$\zeta$ is not homologous to $\beta$.  The image of $\tau_1$ lies in $\wedge^3 \Span{[\beta],[\zeta]}^{\bot}$, and the second
conclusion of Lemma \ref{lemma:linearalgebra} implies that $\tau_2|_{\wedge^3 \Span{[\beta],[\zeta]}^{\bot}}$
is injective.  We conclude that the kernel of $\tau_2 \circ \tau_1$ is
$[\pi,[\pi,\pi]_{\Sigma_g}]$, as desired.

If $C$ is an annulus, then we can cannot find such a $\zeta$.  In this case,
the image of $\tau_1$ lies in $\wedge^3 \Span{[\beta]}^{\bot}$,
and the first conclusion of Lemma \ref{lemma:linearalgebra} says that
$\Ker(\tau_2|_{\wedge^3 \Span{[\beta]}^{\bot}})$ is spanned by $[\beta] \wedge \omega$, where $\omega \in \wedge^2 H$
is the characteristic element.  Letting $\delta \in [\pi,\pi]_{\Sigma_g}$ be the element described in
the theorem, an easy calculation shows that $\tau_1(\delta) = \pm [\beta] \wedge \omega$.
We conclude that the kernel of $\tau_2 \circ \tau_1$ is generated by $[\pi,[\pi,\pi]_{\Sigma_g}]$
and $\delta$, as desired.
\end{proof}

This concludes the proof of Theorem \ref{theorem:birmanjohnsonnosep}.
\end{proof}

In the course of the above proof, 
we proved the following lemma (see in particular equation \eqref{eqn:remember}), which we will need later.

\begin{lemma}
\label{lemma:nosepdiscpushcalc}
For some $g \geq 3$, let $\beta$ be a nonseparating simple closed curve on $\Sigma_g$.  Let $\mathcal{A} \subset \Sigma_g$ be an annulus
one of whose boundary components is $\beta$, let $M = \Sigma_g \setminus \Interior(A)$, and let $M' \subset M$ be a $\Sigma_g$-splitting
surface for $\beta$.  Define $H = \HH_1(\Sigma_g;\Z)$ and $H_{M'} = \HH_1(M';\Z) \subset H$.  Then
\[\tau_{M,\Sigma_g}(\TorelliRel{M}{\Sigma_g}) \cong ((\wedge^2 H_{M'})/\mathbb{Z}) \oplus (\wedge^3 H_{M'}) \subset (\wedge^3 H)/H,\]
where $((\wedge^2 H_{M'})/\mathbb{Z})$ is the image of $\PPT{M}{\Sigma_g}{\beta} \subset \TorelliRel{M}{\Sigma_g}$ and $\wedge^3 H_{M'}$ is the image
of $\TorelliRel{M'}{\Sigma_g} \subset \TorelliRel{M}{\Sigma_g}$.
\end{lemma}


\section{Generating the Johnson kernel}
\label{section:genjohnsonker}

The main result of this section is as follows.

\begin{theorem}
\label{theorem:genjohnsonker}
Let $S$ be a surface whose genus is at least $2$ and let $S \hookrightarrow \Sigma_g^n$ be a clean
embedding with $n \leq 1$.  Then the group $\JKerRel{S}{\Sigma_g^n}$ is generated by the following
elements.
\begin{compactitem}
\item $\Sigma_g^n$-separating twists.
\item $\Sigma_g^n$-bounding pair maps $T_{\beta} T_{\beta'}^{-1}$, where $\beta$ and $\beta'$
are the boundary components of a component $A$ of $\Sigma_g^n \setminus \Interior(S)$ such that
$A$ is homeomorphic to an annulus and $\partial A \subset S$.
\end{compactitem}
\end{theorem}

The proof of Theorem \ref{theorem:genjohnsonker} is in \S \ref{section:genjohnsonkerproofs} following
a preliminary lemma in \S \ref{section:genjohnsonkerdisc}.  Before doing this, we derive
Theorem \ref{maintheorem:genjohnsonker} from Theorem \ref{theorem:genjohnsonker}.

\begin{proof}[{Proof of Theorem \ref{maintheorem:genjohnsonker}}]
Let us recall the statement.  Let $S$ be a surface whose genus is at least $2$ and let $S \hookrightarrow \Sigma_g^n$
be an embedding with $n \leq 1$.  We want to show that $\JKerRelHat{S}{\Sigma_g^n}$ is generated by
$\{\text{$T_{\gamma}$ $|$ $\gamma$ is a separating curve on $\Sigma_g^n$ and $\gamma \subset S$}\}$.
First, without changing $\JKerRelHat{S}{\Sigma_g^n}$ we can add to $S$ any components of $\Sigma_g^n \setminus \Interior(S)$
that are discs.  Thus we can assume without loss of generality that $S \hookrightarrow \Sigma_g^n$ is clean.
The natural map $\rho : \JKerRel{S}{\Sigma_g^n} \rightarrow \JKerRelHat{S}{\Sigma_g^n}$ is a surjection
which takes $\Sigma_g^n$-separating twists to the claimed generators.  It also satisfies
$\rho(T_{\beta} T_{\beta'}^{-1})=0$ whenever $\beta$ and $\beta'$
are the boundary components of a component $A$ of $\Sigma_g^n \setminus \Interior(S)$ such that
$A$ is homeomorphic to an annulus and $\partial A \subset S$.  The desired result thus follows from Theorem \ref{theorem:genjohnsonker}.
\end{proof}

\subsection{Generating the Johnson kernel disc-pushing subgroup}
\label{section:genjohnsonkerdisc}

In preparation for proving Theorem \ref{theorem:genjohnsonker}, we prove the following.

\begin{lemma}
\label{lemma:discseptwists}
Let $S$ be a surface whose genus is at least $2$, let $S \hookrightarrow \Sigma_g$ be a clean
embedding, and let $\beta$ be a boundary component of $S$.  
Let $\Gamma$ be the subgroup of $\JKerRel{S}{\Sigma_g}$ generated by the purported generating
set from Theorem \ref{theorem:genjohnsonker}.  Then $\PPK{S}{\Sigma_g}{\beta} \subset \Gamma$.
\end{lemma}
\begin{proof}
Let $\hat{S}$ be the result of gluing a disc to $S$ along $\beta$ and let $\pi = \pi_1(\hat{S})$.  
\BeginCases
\begin{case}
$\beta$ separates $\Sigma_g$.
\end{case}
Theorem \ref{theorem:birmanjohnsonsep} says that $\PPK{S}{\Sigma_g}{\beta}$ is the pullback of $[\pi,\pi]_{\Sigma_g}$
under the map 
\[\PPT{S}{\Sigma_g}{\beta} \cong \pi_1(U\hat{S}) \rightarrow \pi_1(\hat{S}).\]
Using Lemma \ref{lemma:relcommgen},
we deduce that $\PPK{S}{\Sigma_g}{\beta}$ is generated by two kinds of elements.
\begin{compactitem}
\item The $\Sigma_g$-separating twist $T_{\beta}$ corresponding to the loop around the fiber in $\pi_1(U\hat{S})$.
\item $\PtPsh{\gamma}$ for $\gamma \in [\pi,\pi]_{\Sigma_g}$ that can be realized by either
a torus-bounding curve or a $\Sigma_g$-boundary curve.
Using the explicit formula in \S \ref{section:modbasic}, we
have $\PtPsh{\gamma} = T_{\tilde{\gamma}_1} T_{\tilde{\gamma}_2}^{-1}$ where $T_{\tilde{\gamma}_1}$
and $T_{\tilde{\gamma}_2}$ are $\Sigma_g$-separating twists.
\end{compactitem}
This implies that $\PPK{S}{\Sigma_g}{\beta} \subset \Gamma$, as desired.

\begin{case}
$\beta$ does not separate $\Sigma_g$ and the component of $\Sigma_g \setminus \Interior(S)$ adjacent to $\beta$ is not
an annulus.
\end{case}
Theorem \ref{theorem:birmanjohnsonnosep} says that $\PPK{S}{\Sigma_g}{\beta}$ is the
subgroup $[\pi,[\pi,\pi]_{\Sigma_g}]$ of $\PPT{S}{\Sigma_g}{\beta} \cong [\pi,\pi]_{\Sigma_g}$.  Define
\[A = \Set{$\gamma \in [\pi,\pi]_{\Sigma_g}$}{$\gamma$ can be realized by a torus-bounding or 
$\Sigma_g$-boundary curve}.\]
Lemma \ref{lemma:relcommgen} says that $A$ is a generating set for $[\pi,\pi]_{\Sigma_g}$.  For $\gamma \in A$, define
\begin{align*}
B_{\gamma} = \{\text{$\delta \in \pi$ $|$ }&\text{$\delta$ can be realized by a nonseparating curve that only}\\
&\text{intersects $\gamma$ at the basepoint}\}.
\end{align*}
For all $\gamma \in A$, the set $B_{\gamma}$ generates $\pi$.  It follows that
the set
\[C = \Set{$[\delta,\gamma]$}{$\gamma \in A$ and $\delta \in B_{\gamma}$}\]
normally generates $[\pi,[\pi,\pi]_{\Sigma_g}]$ as a subgroup of $\pi$.

We now prove that for $\zeta \in \pi$ and $[\delta,\gamma] \in C$, we also have 
$[\zeta \delta \zeta^{-1},\zeta \gamma \zeta^{-1}] \in C$.  By the Dehn-Nielsen-Baer
theorem, inner automorphisms of $\pi$ are induced by homeomorphims of $\hat{S}$ that
fix the basepoint.  It follows that there exists some homeomorphism $f : \hat{S} \rightarrow \hat{S}$
that fixes the basepoint and satisfies $f_{\ast}(x) = \zeta x \zeta^{-1}$ for all $x \in \pi$.  We
conclude that
$[\zeta \delta \zeta^{-1},\zeta \gamma \zeta^{-1}] = [f_{\ast}(\delta),f_{\ast}(\gamma)]$, which 
lies in $C$, as claimed.  This implies that $C$ generates $[\pi,[\pi,\pi]_{\Sigma_g}]$.

\Figure{figure:discjohnsonnosepgen}{DiscJohnsonNosepGen}{
a. $\gamma$ is a torus-bounding curve and $\delta$ is inside the torus. \CaptionSpace
b. $\PtPsh{\delta} = T_{\tilde{\delta}_1} T_{\tilde{\delta}_2}^{-1}$ and $\PtPshHat{\gamma} = T_{\tilde{\gamma}_1} (T_{\beta} T_{\tilde{\gamma}_2}^{-1}$ \CaptionSpace
c. $\gamma$ is a torus-bounding curve and $\delta$ is outside the torus. \CaptionSpace
d. $[\PtPsh{\delta},\PtPshHat{\gamma}] = T_{T_{\tilde{\delta}}(\tilde{\gamma})} T_{\tilde{\gamma}}^{-1}$.}

Consider $[\delta,\gamma] \in C$.  The associated element of $\PPK{S}{\Sigma_g}{\beta}$ is $[\PtPsh{\delta},\PtPshHat{\gamma}]$.
We must show that this lies in $\Gamma$.  We will give the details for the case where $\gamma$ is a torus-bounding curve; the case where $\gamma$ is a
$\Sigma_g$-boundary curve is similar.  Let $X$ be the component of $S$ cut along $\gamma$ that is
a one-holed torus.  There are two cases.  In the first,
$\delta$ can be realized by a simple closed nonseparating curve lying inside $X$.  For definiteness,
assume that the orientations on $\gamma$ and $\delta$ are the same as those in Figure
\ref{figure:discjohnsonnosepgen}.a; the other cases are similar.  Letting $\tilde{\gamma}_i$ and $\tilde{\delta}_i$ be the curves
in Figure \ref{figure:discjohnsonnosepgen}.b, the formulas
in \S \ref{section:modbasic} and \S \ref{section:torellibasic} imply that
\[[\PtPsh{\delta},\PtPshHat{\gamma}] = [T_{\tilde{\delta}_1} T_{\tilde{\delta}_2}^{-1}, T_{\tilde{\gamma}_1} (T_{\beta} T_{\tilde{\gamma}_2}^{-1})] = [T_{\tilde{\delta}_1}, T_{\tilde{\gamma}_1}] = T_{\tilde{\delta}_1} T_{\tilde{\gamma}_1} T_{\tilde{\delta}_1}^{-1} T_{\tilde{\gamma}_1}^{-1} = T_{T_{\tilde{\delta}_1}(\tilde{\gamma}_1)} T_{\tilde{\gamma}_1}^{-1}.\]
Both $T_{T_{\tilde{\delta}_1}(\tilde{\gamma}_1)}$ and $T_{\tilde{\gamma}_1}$ are $\Sigma_g$-separating twists, so this lies in $\Gamma$.

The other case is when $\delta$ can be realized by a simple closed nonseparating curve which aside from the basepoint lies entirely
outside $X$.  For definiteness, assume that the orientations on $\gamma$ and $\delta$ are the same as those in Figure \ref{figure:discjohnsonnosepgen}.c;
the other cases are similar.  Letting $\tilde{\gamma}$ and $\tilde{\delta}$ be the curves
in Figure \ref{figure:discjohnsonnosepgen}.d, a similar calculation to the one above shows that
\[[\PtPsh{\delta},\PtPshHat{\gamma}] = T_{T_{\tilde{\delta}}(\tilde{\gamma})} T_{\tilde{\gamma}}^{-1}.\]
Unfortunately, $T_{\tilde{\gamma}}$ is not a $\Sigma_g$-separating twist.  However, let $\epsilon$ be the curve shown
in Figure \ref{figure:discjohnsonnosepgen}.d and let $S'$ be the subsurface of $S$ to the left of $\epsilon$.  Then
we can regard $\phi := T_{T_{\tilde{\delta}}(\tilde{\gamma})} T_{\tilde{\gamma}}^{-1}$ as an element of $\JKerRel{S'}{\Sigma_g}$.  Moreover,
$\phi$ becomes homotopic to the identity when a disc is glued to $S'$ along $\epsilon$, so $\phi \in \PPK{S'}{\Sigma_g}{\epsilon}$.  The
previous case therefore applies to show that $\phi$ can be written as a product of $\Sigma_g$-separating twists, as desired.

\begin{case}
$\beta$ does not separate $\Sigma_g$ and the component of $\Sigma_g \setminus \Interior(S)$ adjacent to $\beta$ is
an annulus.
\end{case}
This is similar to case 2, the only difference being the extra generator for $\PPK{S}{\Sigma_g}{\beta}$ given
by Theorem \ref{theorem:birmanjohnsonnosep}.  As indicated in the remark following Theorem \ref{theorem:birmanjohnsonnosep},
this extra generator lies in $\Gamma$.
\end{proof}

\subsection{The Johnson kernel is generated by separating twists}
\label{section:genjohnsonkerproofs}

We now prove Theorem \ref{theorem:genjohnsonker}.  Our proof will use the following theorem
of the author.

\begin{theorem}
\label{theorem:gentorelli}
Fix some $g \geq 3$.  Let $\alpha$ and $\beta$ be simple closed curves on $\Sigma_g$ that
intersect once.  Then $\Torelli{\Sigma_g}$ is generated by the stabilizer subgroups 
$(\Torelli{\Sigma_g})_{\alpha}$ and $(\Torelli{\Sigma_g})_{\beta}$.
\end{theorem}
\begin{proof}
An immediate consequence of Lemmas 3.1, 5.1, and 5.2 of \cite{PutmanCutPaste}.
\end{proof}

\begin{proof}[{Proof of Theorem \ref{theorem:genjohnsonker}}]
Let us recall the statement.  Let $S$ be a surface whose genus is at least $2$ and let $S \hookrightarrow \Sigma_g^n$ be a clean
embedding with $n \leq 1$.  Then we want to prove that the group $\JKerRel{S}{\Sigma_g^n}$ is generated by the following
elements.
\begin{compactitem}
\item $\Sigma_g^n$-separating twists.
\item $\Sigma_g^n$-bounding pair maps $T_{\beta} T_{\beta'}^{-1}$, where $\beta$ and $\beta'$
are the boundary components of a component $A$ of $\Sigma_g^n \setminus \Interior(S)$ such that
$A$ is homeomorphic to an annulus and $\partial A \subset S$.
\end{compactitem}
If $n=1$, then there is a clean embedding $\Sigma_g^n \hookrightarrow \Sigma_{g+1}$ and Lemma \ref{lemma:noambiguity}
says that $\JKerRel{S}{\Sigma_g^n} = \JKerRel{S}{\Sigma_{g+1}}$.  We can therefore assume without loss of generality
that $n=0$.

Let $\Gamma(S,\Sigma_g)$ be the subgroup of $\JKerRel{S}{\Sigma_g}$ generated by the above elements.
If $S = \Sigma_g$, then we will write $\Gamma(\Sigma_g)$ instead of $\Gamma(S,\Sigma_g)$.
We want to prove that $\Gamma(S,\Sigma_g) = \JKerRel{S}{\Sigma_g}$.  We break this into several steps.

\BeginStepsb
\begin{stepb}
\label{step:genus2}
$\Gamma(\Sigma_2) = \JKer{\Sigma_2}$.
\end{stepb}

The Johnson homomorphism $\tau_{\Sigma_2}$ vanishes, so $\JKer{\Sigma_2} = \Torelli{\Sigma_2}$ and
the result follows from the fact that $\Torelli{\Sigma_2}$ is generated by separating twists (this
theorem should probably be attributed to Powell \cite{PowellTorelli}, though he does not state it;
see \cite{PutmanCutPaste} and \cite{HatcherMargalit} for modern proofs).

\begin{stepb}
\label{step:boundaryreduction}
For some $h \geq 2$, assume that $\Gamma(\Sigma_h) = \JKer{\Sigma_h}$.  
Then $\Gamma(S,\Sigma_g) = \JKerRel{S}{\Sigma_g}$
whenever $S$ has genus $h$.
\end{stepb}

The proof is by induction on the number of boundary components of $S$.  For the base case when $S$ is a closed surface (and thus $g=h$ and $S = \Sigma_g$),
the result holds by assumption.  Now assume that $S$ has nonempty boundary.  There are three cases.  
The first is that $S$ has one boundary component.  Letting $\beta$ be the boundary component of $S$ and
$\hat{S}$ be the result of gluing a disc to $\beta$,
Theorem \ref{theorem:birmanjohnsonone} says that there is a short exact sequence
\[1 \longrightarrow \PPK{S}{\Sigma_g}{\beta} \longrightarrow \JKerRel{S}{\Sigma_g} \longrightarrow \JKer{\hat{S}} \longrightarrow 1.\]
Lemma \ref{lemma:discseptwists} says that $\PPK{S}{\Sigma_g}{\beta}$ 
is contained in $\Gamma(S,\Sigma_g)$.  Also, examining our purported generating set we see that $\Gamma(S,\Sigma_g)$
projects to $\Gamma(\hat{S})$.  We have assumed that $\Gamma(\Sigma_h) = \JKer{\Sigma_h}$, so
we conclude that $\Gamma(S,\Sigma_g) = \JKerRel{S}{\Sigma_g}$.

The second case is when $S$ has multiple boundary components, but all of them separate $\Sigma_g$.
Let $\beta$ be one of the boundary components of $S$ and $S' \hookrightarrow S$ be a splitting surface
for $\beta$.  Theorem \ref{theorem:birmanjohnsonsep} says that
\[\JKerRel{S}{\Sigma_g} = \PPK{S}{\Sigma_g}{\beta} \rtimes \JKerRel{S'}{\Sigma_g}.\]
Lemma \ref{lemma:discseptwists} says that we have $\PPK{S}{\Sigma_g}{\beta} \subset \Gamma(S,\Sigma_g)$. 
Our inductive
hypothesis says that $\JKerRel{S'}{\Sigma_g} = \Gamma(S',\Sigma_g)$; examining our purported generating set,
we see that every purported generator for $\JKerRel{S'}{\Sigma_g}$ is also one for the larger
group $\JKerRel{S}{\Sigma_g}$, so we deduce that $\JKerRel{S'}{\Sigma_g} \subset \Gamma(S,\Sigma_g)$.  We conclude
that $\JKerRel{S}{\Sigma_g} = \Gamma(S,\Sigma_g)$, as desired.

Finally, if $S$ has multiple boundary components and one of them does not separate $\Sigma_g$, then we
can substitute Theorem \ref{theorem:birmanjohnsonnosep} for Theorem \ref{theorem:birmanjohnsonsep} (and
use a $\Sigma_g$-splitting surface) in the above argument.

\begin{stepb}
\label{step:genusreduction}
For some $g \geq 3$, assume that $\Gamma(S,\Sigma_g) = \JKerRel{S}{\Sigma_g}$ for all
cleanly embedded subsurfaces $S$ of $\Sigma_g$ whose genus is less than $g$.  Then
$\Gamma(\Sigma_g)=\JKer{\Sigma_g}$.
\end{stepb}

Set $H = \HH_1(\Sigma_g;\Z)$ and $\mathcal{Q} = \Torelli{\Sigma_g} / \Gamma(\Sigma_g)$.
The surjection $\tau_{\Sigma_g} : \Torelli{\Sigma_g} \rightarrow (\wedge^3 H)/H$ induces
a surjection $\psi : \mathcal{Q} \rightarrow (\wedge^3 H)/H$.  Our goal is to show that
$\psi$ is an isomorphism.  We divide this into four substeps.  Let $\alpha$ and $\beta$
be two curves on $\Sigma_g$ that intersect once.

\BeginSubStepsb
\begin{substepb}
\label{substep:makephi}
Let $x$ be either $\alpha$ or $\beta$.  We construct groups $\mathcal{Q}_x$ and maps $\phi_x : \mathcal{Q}_x \rightarrow \mathcal{Q}$.
\end{substepb}
Let $\mathcal{A}_x$ be an annulus in $\Sigma_g$ one of whose boundary components is $x$
and let $M_x = \Sigma_g \setminus \Interior(\mathcal{A}_x)$.  Thus $M_x \cong \Sigma_{g-1}^2$ and we have
a surjection $\TorelliRel{M_x}{\Sigma_g} \rightarrow (\Torelli{\Sigma_g})_x$.  Our assumptions
say that $\Gamma(M_x,\Sigma_g) = \JKerRel{M_x}{\Sigma_g}$, and image of $\Gamma(M_x,\Sigma_g)$ under
the map $\TorelliRel{M_x}{\Sigma_g} \rightarrow \Torelli{\Sigma_g}$ is contained in
$\Gamma(\Sigma_g)$.  Defining $\mathcal{Q}_x = \TorelliRel{M_x}{\Sigma_g} / \JKerRel{M_x}{\Sigma_g}$, we
get an induced map $\phi_x : \mathcal{Q}_x \rightarrow \mathcal{Q}$.

By definition,
\[\mathcal{Q}_x \cong \tau_{M_x,\Sigma_g}(\TorelliRel{M_x}{\Sigma_g}) \subset (\wedge^3 H)/H.\]
Let $M' \subset \Sigma_g$ be the complement of an open regular neighborhood of $\alpha \cup \beta$.  
Thus $M' \subset M_x$ and $M' \cong \Sigma_{g-1}^1$.  Moreover, 
$M'$ is a $\Sigma_g$-splitting surface for the boundary
component $x$ of $M_x$.  Letting $H_{M'} = \HH_1(M';\Z) \subset H$, Lemma \ref{lemma:nosepdiscpushcalc}
implies that
\[\mathcal{Q}_x \cong \left((\wedge^2 H_{M'})/\Z\right) \oplus \left(\wedge^3 H_{M'}\right) \subset (\wedge^3 H)/H.\]
We will identify $\mathcal{Q}_x$ with $\left((\wedge^2 H_{M'})/\Z\right) \oplus \left(\wedge^3 H_{M'}\right)$.  
The next two substeps focus on the effect of $\phi_x$ on these two summands.

\begin{substepb}
\label{substep:sameimage}
For all $v \in \wedge^3 H_{M'}$, we have $\phi_{\alpha}(v) = \phi_{\beta}(v)$.
\end{substepb}
Since $\wedge^3 H_{M'} \subset \mathcal{Q}_{\alpha}$ and $\wedge^3 H_{M'} \subset \mathcal{Q}_{\beta}$ are the 
images of $\TorelliRel{M'}{\Sigma_g}$, this follows from the commutative diagram
\[\xymatrix{
& \TorelliRel{M_{\alpha}}{\Sigma_g} \ar[rd] & \\
\TorelliRel{M'}{\Sigma_g} \ar[ru] \ar[rd] & & \Torelli{\Sigma_g} \\
& \TorelliRel{M_{\beta}}{\Sigma_g} \ar[ru]  & }\]

\begin{substepb}
\label{substep:commute}
The images $\phi_{\alpha}((\wedge^2 H_{M'})/\Z) \subset \mathcal{Q}$ and 
$\phi_{\beta}((\wedge^2 H_{M'})/\Z) \subset \mathcal{Q}$ commute.
\end{substepb}

For $x$ equal to either $\alpha$ or $\beta$, let $\hat{M}_x$ be the result
of gluing a disc to $M_x$ along $x$ (see Figures \ref{figure:commute}a--c).  
Recall from \S \ref{section:torellibasic} that
$\PPT{M_x}{\Sigma_g}{x} \cong [\pi_1(\hat{M}_x),\pi_1(\hat{M}_x)]$ and that for
$\gamma \in [\pi_1(\hat{M}_x),\pi_1(\hat{M}_x)]$ the associated element
of $\PPT{M_x}{\Sigma_g}{x}$ is denoted $\PtPshHat{\gamma}$.  Let
$\rho_x : \TorelliRel{M_x}{\Sigma_g} \rightarrow \mathcal{Q}_x$ be the projection.

Lemma \ref{lemma:commgen} says that $[\pi_1(\hat{M}_{\alpha}),\pi_1(\hat{M}_{\alpha})]$
is generated by torus-bounding curves $a$.  Fixing such an 
$a \in [\pi_1(\hat{M}_{\alpha}),\pi_1(\hat{M}_{\alpha})]$, it is enough
to find a set $B \subset [\pi_1(\hat{M}_{\beta}),\pi_1(\hat{M}_{\beta})]$ with the following
two properties.
\begin{compactenum}
\item The set $\Set{$\rho_{\beta}(\PtPshHat{b})$}{$b \in B$}$ generates 
$(\wedge^2 H_{M'})/\Z \subset \mathcal{Q}_{\beta}$.
\item For all $b \in B$, we have $[\phi_{\alpha}(\rho_{\alpha}(\PtPshHat{a})),\phi_{\beta}(\rho_{\beta}(\PtPshHat{b}))]=1 \in \mathcal{Q}$.
\end{compactenum}

\Figure{figure:commute}{Commute}{a. Two curves $\alpha$ and $\beta$ that intersect once and
the complement $M'$ of an open regular neighborhood of $\alpha \cup \beta$. \CaptionSpace
b. The surface $\hat{M}_{\alpha}$, the torus-bounding curve $a \in \pi_1(\hat{M}_{\alpha})$, and
the curves $e_1,\ldots,e_{2g-2}$.  \CaptionSpace
c. The surface $\hat{M}_{\beta}$; the dotted loop on the left hand side is a boundary component
on the back of the surface.  As an example of how to find the curves $b_i$, the curves
$b_3$ and $b_4$ are shown.  The curves $e_1,\ldots,e_{2g-2}$
are drawn faintly.}

As shown in Figure \ref{figure:commute}.b, the curve $a$ divides $M'$ into two components $X$ and $Y$
with $Y$ a one-holed torus.  Choose unbased oriented simple closed curves $e_1,\ldots,e_{2g-2}$ in $M'$
with the following properties (see Figure \ref{figure:commute}.b).
\begin{compactitem}
\item $e_1,\ldots,e_{2g-4} \subset X$ and $e_{2g-3},e_{2g-2} \subset Y$.
\item For $1 \leq i < j \leq 2g-2$, the curves $e_i$ and $e_j$ are disjoint unless $i=2k$ and
$j=2k+1$ for some $1 \leq k \leq g-1$, in which case they intersect once with positive sign.
\end{compactitem}
This implies that $[e_1],\ldots,[e_{2g-2}]$ is a symplectic basis for $H_{M'}$, and thus
that $\wedge^2 H_{M'}$ is generated by $\Set{$[e_i] \wedge [e_j]$}{$1 \leq i < j \leq 2g-2$}$.

As is shown in Figure \ref{figure:commute}.c, we can find elements 
$b_1,\ldots,b_{2g-2} \in \pi_1(\hat{M}_{\beta})$ with the following properties.
\begin{compactitem}
\item For $1 \leq i \leq 2g-2$, the curve $b_i$ is freely homotopic to and disjoint from $e_i$.
\item For $1 \leq i,j \leq 2g-2$ such that $i \neq j$ and such that $e_i$ is disjoint from $e_j$, 
the curve $b_i$ is also disjoint from $e_j$.
\end{compactitem}
The first of these shows that for $1 \leq i < j \leq 2g-2$, the element
$\rho_{\beta}(\PtPshHat{[b_i,b_j]}) \in (\wedge^2 H_{M'})/\Z \subset \mathcal{Q}_{\beta}$ equals
the image of $[e_i] \wedge [e_j] \in \wedge^2 H_{M'}$ in $(\wedge^2 H_{M'})/\Z$.  It follows
that the set
\[B = \Set{$[b_i,b_j]$}{$1 \leq i < j \leq 2g-2$}\]
satisfies the first property of $B$ listed above.

It remains to prove the second claimed property of $B$.  Fixing $1 \leq i < j \leq 2g-2$,
this asserts that
\begin{equation}
\label{eqn:toverify}
[\phi_{\alpha}(\rho_{\alpha}(\PtPshHat{a})),\phi_{\beta}(\rho_{\beta}(\PtPshHat{[b_i,b_j]}))]=1.
\end{equation}
Let $i_{\alpha} : \TorelliRel{M_{\alpha}}{\Sigma_g} \rightarrow \Torelli{\Sigma_g}$ and
$i_{\beta} : \TorelliRel{M_{\beta}}{\Sigma_g} \rightarrow \Torelli{\Sigma_g}$ be the maps
induced by the inclusions $M_{\alpha} \hookrightarrow \Sigma_g$ and $M_{\beta} \hookrightarrow \Sigma_g$.
The equation \eqref{eqn:toverify} is equivalent to the assertion that the element
$\theta:=[i_{\alpha}(\PtPshHat{a}),i_{\beta}(\PtPshHat{[b_i,b_j]}] \in \JKer{\Sigma_g}$ lies
in $\Gamma(\Sigma_g)$.

By construction (and the fact that $g \geq 3$), 
there is some $1 \leq k \leq 2g-2$ such that $e_k$ is disjoint from
both $b_i$ and $b_j$.  Since $e_k$ is also disjoint from $a$, we see that
$e_k$ is fixed by $\PtPshHat{[b_i,b_j]}$ and $\PtPshHat{a}$, and thus also fixed
by $\theta$.  Letting $R$ be the complement in
$\Sigma_g$ of an open regular neighborhood of $e_k$, we deduce that $\theta$
lies in the image of $\JKerRel{R}{\Sigma_g}$.  By assumption, we have that
$\Gamma(R,\Sigma_g) = \JKerRel{R}{\Sigma_g}$, so we conclude that $\theta \in \Gamma(\Sigma_g)$,
as desired.

\begin{substepb}
\label{substep:finishoff}
We use $\phi_{\alpha}$ and $\phi_{\beta}$ to prove that $\psi$ is an isomorphism.
\end{substepb}
The universal property of the free product says that $\phi_{\alpha}$ and $\phi_{\beta}$ can be combined 
into a map
\[\phi_{\alpha} \ast \phi_{\beta} : \mathcal{Q}_{\alpha} \ast \mathcal{Q}_{\beta} \cong \left[\left((\wedge^2 H_{M'})/\Z\right) \oplus \left(\wedge^3 H_{M'}\right)\right] \ast \left[\left((\wedge^2 H_{M'})/\Z\right) \oplus \left(\wedge^3 H_{M'}\right)\right] \longrightarrow \mathcal{Q}.\]
Theorem \ref{theorem:gentorelli} implies that $\phi_{\alpha} \ast \phi_{\beta}$ is surjective, and Substeps 
\ref{substep:sameimage} and \ref{substep:commute} show that $\phi_{\alpha} \ast \phi_{\beta}$ factors
through a surjective homomorphism
\[\phi : \left[(\wedge^2 H_{M'})/\Z\right] \oplus \left[(\wedge^2 H_{M'})/\Z\right] \oplus \left[\wedge^3 H_{M'}\right] \longrightarrow \mathcal{Q}.\]

Summing up, we have surjective maps
of the form
\[\left[(\wedge^2 H_{M'})/\Z\right] \oplus \left[(\wedge^2 H_{M'})/\Z\right] \oplus \left[\wedge^3 H_{M'}\right] \stackrel{\phi}{\twoheadlongrightarrow} \mathcal{Q} \stackrel{\psi}{\twoheadlongrightarrow} (\wedge^3 H)/H.\]
Since $g \geq 3$ (and, in particular, $g > 1$), the group 
$(\wedge^3 H)/H$ is a free abelian group of rank $\binom{2g}{3} - 2g$.  Also, 
$\left[(\wedge^2 H_{M'})/\Z\right] \oplus \left[(\wedge^2 H_{M'})/\Z\right] \oplus \left[\wedge^3 H_{M'}\right]$
is a free abelian group of rank
\[\left[\binom{2g-2}{2}-1\right] + \left[\binom{2g-2}{2}-1\right] + \binom{2g-2}{3}.\]
A little elementary algebra shows that this also equals $\binom{2g}{3} - 2g$.  In other words, $\psi \circ \phi$ is a surjective map
between free abelian groups of the same rank, so it must be an isomorphism.  We conclude that $\psi$ is also an isomorphism, as desired.
\end{proof}


\section{Finite-index subgroups of the Torelli group}
\label{section:abeltorelli}

In this section, we prove Theorem \ref{maintheorem:abeltorelli}.  We first recall its statement.
Let $S$ be a surface whose genus is at least $3$ and $S \hookrightarrow \Sigma_g^n$ an embedding
with $n \leq 1$.  Also, let $\Gamma$ be a finite-index subgroup of $\TorelliRelHat{S}{\Sigma_g^n}$
with $\JKerRelHat{S}{\Sigma_g^n} < \Gamma$.  Then Theorem \ref{maintheorem:abeltorelli} asserts that
$\HH_1(\Gamma;\Q) \cong \tau_{\Sigma_g^n}(\TorelliRelHat{S}{\Sigma_g^n}) \otimes \Q$.

The target of $\tau_{\Sigma_g^n}$ is a free abelian group and $[\TorelliRelHat{S}{\Sigma_g^n}:\Gamma] < \infty$, so
\[\tau_{\Sigma_g^n}(\TorelliRelHat{S}{\Sigma_g^n}) \otimes \Q \cong \tau_{\Sigma_g^n}(\Gamma) \otimes \Q.\]
Since $\JKerRelHat{S}{\Sigma_g^n} < \Gamma$, we have $\Ker(\tau_{\Sigma_g^n}|_{\Gamma}) = \JKerRelHat{S}{\Sigma_g^n}$, so
Theorem \ref{maintheorem:genjohnsonker} implies that $\Ker(\tau_{\Sigma_g^n}|_{\Gamma})$ is generated by separating twists.
We conclude that Theorem \ref{maintheorem:abeltorelli} is equivalent to the following lemma.  If $G$ is a group and $g \in G$, then
denote by $[g]_{G}$ the associated element of $\HH_1(G;\Q)$.

\begin{lemma}[Separating twists vanish]
\label{lemma:septwistsvanish}
For $g \geq 3$ and $n \leq 1$, let $S \hookrightarrow \Sigma_g^n$ be
a compact connected subsurface of genus at least $3$.
Let $\Gamma$ be a finite-index subgroup of $\TorelliRelHat{S}{\Sigma_g^n}$
with $\JKerRelHat{S}{\Sigma_g^n} < \Gamma$ and let $\delta$ be a simple closed
curve on $S$ which separates $\Sigma_g^n$.  Then $[T_{\delta}]_{\Gamma} = 0$.
\end{lemma}

\begin{remark}
In the special case that $S = \Sigma_g^n$ and $\Gamma = \TorelliRelHat{S}{\Sigma_g^n}$, this was proven
by Johnson in \cite{JohnsonAbel}.  Our proof builds on Johnson's proof.
\end{remark}

The proof of Lemma \ref{lemma:septwistsvanish} is contained in \S \ref{section:vanishing2}.  This
is proceeded by \S \ref{section:vanishing1}.

\subsection{Separating twists are torsion, special case}
\label{section:vanishing1}

In this section, we prove the following lemma.

\begin{lemma}
\label{lemma:septwistsvanishweak}
For $g \geq 2$ and $n \leq 1$, let $S \hookrightarrow \Sigma_g^n$ be
a compact connected subsurface such that $S \cong \Sigma_h^2$ with $h \geq 2$ and such that
both boundary components of $S$ separate $\Sigma_g^n$.  Let $\beta$ be a boundary
component of $S$.  Then $[T_{\beta}]_{\TorelliRelHat{S}{\Sigma_g^n}} = 0$.
\end{lemma}

The proof requires the following lemma.  Recall that we defined bounding pair maps in \S \ref{section:torellibasic}.  Also,
if $\gamma$ is an oriented simple closed curve on $S$, write $[\gamma]$ for its homology class in $\HH_1(\Sigma_g^n;\Z)$.

\begin{lemma}
\label{lemma:conjugacyclassbp}
For $g \geq 2$ and $n \leq 1$, let $S \hookrightarrow \Sigma_g^n$ be
a compact connected subsurface such that $S \cong \Sigma_h^2$ and such that
both boundary components of $S$ separate $\Sigma_g^n$.  Let $\beta$ be a boundary
component of $S$ and let $T_{x_1} T_{x_2}^{-1}$ and $T_{x_1'} T_{x_2'}^{-1}$
be $\Sigma_g^n$-bounding pair maps in $\TorelliRel{S}{\Sigma_g^n}$ that satisfy the following conditions.
\begin{compactenum}
\item For $1 \leq i \leq 2$, the $1$-submanifolds $\beta \cup x_1 \cup x_2$  and $\beta \cup x_1' \cup x_2'$ 
of $S$ each bound subsurfaces homeomorphic to $\Sigma_{0,3}$ (see Figure \ref{figure:septwistsvanish}.a).
\item Orient $x_1$ and $x_2$ (resp.\ $x_1'$ and $x_2'$) so that $\beta$ is in the component of $S$ cut along $x_1 \cup x_2$ (resp.\ $x_1' \cup x_2'$) to the left
of $x_1$ and $x_2$ (resp.\ $x_1'$ and $x_2'$).  Then $[x_1] = [x_1']$.
\end{compactenum}
Then $T_{x_1} T_{x_2}^{-1}$ and $T_{x_1'} T_{x_2'}^{-1}$ are conjugate elements of $\TorelliRelHat{S}{\Sigma_g^n}$.
\end{lemma}

This is similar to a result of Johnson \cite[Theorem 1B]{JohnsonConj} and can be proved in exactly the same way.

\begin{remark}
The statement of \cite[Theorem 1B]{JohnsonConj} contains an extra condition about a certain splitting of homology, but
that condition is trivially satisfied in our situation.
\end{remark}

\Figure{figure:septwistsvanish}{SepTwistsVanish}
{a. Curves like in Lemma \ref{lemma:conjugacyclassbp}
\CaptionSpace b. A lantern relation $T_{\beta} = (T_{x_1} T_{x_2}^{-1}) (T_{y_1} T_{y_2}^{-1}) (T_{z_1} T_{z_2}^{-1})$
\CaptionSpace c,d. Two lantern relations as indicated in the proof of Lemma \ref{lemma:septwistsvanishweak}.  We
only draw the four ``exterior'' curves, namely $\beta \cup x_2 \cup y_2 \cup z_2$ and $\beta \cup x_2' \cup y_2' \cup z_2'$.}

\begin{proof}[{Proof of Lemma \ref{lemma:septwistsvanishweak}}]
We will use the {\em lantern relation}, which is the relation
\[T_{\beta} = (T_{x_1} T_{x_2}^{-1}) (T_{y_1} T_{y_2}^{-1}) (T_{z_1} T_{z_2}^{-1}),\]
where the curves $x_i$ and $y_i$ and $\beta$ are the curves on the $4$-holed sphere $\Sigma_0^4$ shown 
in Figure \ref{figure:septwistsvanish}.b (see \cite[\S 7g]{Dehn}).
Let $x_2, y_2, z_2 \subset S$ be the curves shown in Figure \ref{figure:septwistsvanish}.c.  The
multicurve $\beta \cup x_2 \cup y_2 \cup z_2$ bounds a subsurface $\mathcal{L}$ of $S$ with $\mathcal{L} \cong \Sigma_0^4$.
Inside this subsurface $\mathcal{L}$, we can find curves $x_1 \cup y_1 \cup z_1$ such that
we have a lantern relation
\begin{equation}
\label{eqn:lantern1}
T_{\beta} = (T_{x_1} T_{x_2}^{-1}) (T_{y_1} T_{y_2}^{-1}) (T_{z_1} T_{z_2}^{-1}).
\end{equation}
We remark that the curves $x_1 \cup y_1 \cup z_1$ are not unique (they depend on a choice of homeomorphism
$\mathcal{L} \cong \Sigma_0^4$), but any choice will work in our proof.
Similarly, if $x_2', y_2', z_2' \subset S$ are the curves in Figure \ref{figure:septwistsvanish}.d, then
we can find curves $x_1' \cup y_2' \cup z_2'$ such that we have a lantern relation 
\begin{equation}
\label{eqn:lantern2}
T_{\beta} = (T_{x_1'} T_{x_2'}^{-1}) (T_{y_1'} T_{y_2'}^{-1}) (T_{z_1'} T_{z_2'}^{-1}).
\end{equation}
Orient $x_1$ and $x_2$ (resp.\ $x_1'$ and $x_2'$) so that $\beta$ is in the component of $S$ cut along $x_1 \cup x_2$ (resp.\ $x_1' \cup x_2'$) to the left
of $x_1$ and $x_2$ (resp.\ $x_1'$ and $x_2'$).  This choice of orientation yields $[x_1] = -[x_2]$ and
$[x_1'] = -[x_2']$.  As shown in Figure \ref{figure:septwistsvanish}.c,d, we have
$[x_2] = -[x_2']$, so
\[[x_1] = -[x_2] = [x_2'].\]
Lemma \ref{lemma:conjugacyclassbp} thus implies
that $T_{x_1} T_{x_2}^{-1}$ is conjugate in $\TorelliRelHat{S}{\Sigma_g^n}$ to $T_{x_2'} T_{x_1'}^{-1}$
(notice that we are using $T_{x_2'} T_{x_1'}^{-1}$ here and not $T_{x_1'} T_{x_2'}^{-1}$),
and hence
$[T_{x_1} T_{x_2}^{-1}]_{\TorelliRelHat{S}{\Sigma_g^n}} = -[T_{x_1'} T_{x_2'}^{-1}]_{\TorelliRelHat{S}{\Sigma_g^n}}$.  Similarly, we have
$[T_{y_1} T_{y_2}^{-1}]_{\TorelliRelHat{S}{\Sigma_g^n}} = -[T_{y_1'} T_{y_2'}^{-1}]_{\TorelliRelHat{S}{\Sigma_g^n}}$ and
$[T_{z_1} T_{z_2}^{-1}]_{\TorelliRelHat{S}{\Sigma_g^n}} = -[T_{z_1'} T_{z_2'}^{-1}]_{\TorelliRelHat{S}{\Sigma_g^n}}$.
Combining these facts with \eqref{eqn:lantern1} and \eqref{eqn:lantern2}, we deduce that
\begin{align*}
2[T_{\beta}]_{\TorelliRelHat{S}{\Sigma_g^n}} = (&[T_{x_1} T_{x_2}^{-1}]_{\TorelliRelHat{S}{\Sigma_g^n}} + [T_{y_1} T_{y_2}^{-1}]_{\TorelliRelHat{S}{\Sigma_g^n}} + [T_{z_1} T_{z_2}^{-1}]_{\TorelliRelHat{S}{\Sigma_g^n}})\\ 
&+ ([T_{x_1'} T_{x_2'}^{-1}]_{\TorelliRelHat{S}{\Sigma_g^n}} + [T_{y_1'} T_{y_2'}^{-1}]_{\TorelliRelHat{S}{\Sigma_g^n}} + [T_{z_1'} T_{z_2'}^{-1}]_{\TorelliRelHat{S}{\Sigma_g^n}}) = 0,
\end{align*}
as desired.
\end{proof}

\subsection{Separating twists are torsion, general case}
\label{section:vanishing2}

In this section, we derive Lemma \ref{lemma:septwistsvanish} from Lemma \ref{lemma:septwistsvanishweak}.  This requires
the following lemma, which is an abstraction of the main idea of the proof of
\cite[Theorem 1.1]{PutmanFiniteIndexNote}.

\begin{lemma}
\label{lemma:finiteindexvanish}
Let $G$ be a group, let $G' < G$ be a finite index subgroup, and let $g \in G$ be
a central element.  Assume that $[g]_G = 0$ and that $g \in G'$.  Then $[g]_{G'}=0$.
\end{lemma}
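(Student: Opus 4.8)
The plan is to use the classical transfer (Verlagerung) homomorphism. For the finite-index subgroup $G' < G$, I would invoke the transfer homomorphism $\mathrm{tr} \colon \HH_1(G;\Q) \to \HH_1(G';\Q)$, which on abelianizations is described as follows. Fix a left transversal $\{t_1,\dots,t_n\}$ for $G'$ in $G$, so that $G = \bigsqcup_{i=1}^n t_i G'$ with $n = [G:G']$. For $h \in G$ and each $i$ there is a unique index $h\cdot i$ with $h t_i \in t_{h\cdot i} G'$; writing $h t_i = t_{h\cdot i}\, k_i(h)$ with $k_i(h) \in G'$, one has $\mathrm{tr}([h]_G) = \sum_{i=1}^n [k_i(h)]_{G'}$, and this is a well-defined homomorphism independent of the chosen transversal.

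The key computation is to evaluate $\mathrm{tr}([g]_G)$ using the two hypotheses on $g$. Since $g$ is central we have $g t_i = t_i g$, and since $g \in G'$ this gives $g t_i = t_i g \in t_i G'$; hence the permutation of cosets induced by $g$ is the identity, i.e.\ $g\cdot i = i$ for every $i$. Consequently $k_i(g) = t_i^{-1} g t_i = g$ for all $i$ (again using centrality), so
$$\mathrm{tr}([g]_G) = \sum_{i=1}^n [g]_{G'} = [G:G']\,[g]_{G'}.$$

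To finish, I would apply $\mathrm{tr}$ to the hypothesis $[g]_G = 0$, obtaining $[G:G']\,[g]_{G'} = 0$ in $\HH_1(G';\Q)$. Since $\HH_1(G';\Q)$ is a $\Q$-vector space and the integer $[G:G']$ is nonzero, this forces $[g]_{G'} = 0$, which is the assertion of the lemma.

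I do not expect a serious obstacle here: the only point requiring care is the verification that the coset permutation induced by $g$ is trivial, and this is exactly where both hypotheses --- that $g$ is central and that $g \in G'$ --- enter. If one prefers not to cite the transfer as a black box, the same bookkeeping with $\Z$ coefficients gives $\mathrm{tr}([g]_G) = [G:G']\,[g]_{G'}$ in $\HH_1(G';\Z)$, whence $[g]_{G'}$ is torsion, and tensoring with $\Q$ yields the claim; either way the argument is short.
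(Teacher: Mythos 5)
Your proof is correct, and it takes a genuinely different (and more elementary) route than the one in the paper. The paper assumes without loss of generality that $g$ has infinite order, passes to the quotients $\overline{G} = G/\Span{g}$ and $\overline{G}' = G'/\Span{g}$, compares the two central extensions via the map of five-term exact sequences in rational group homology, and invokes the homological transfer in degree $2$ to see that $\HH_2(\overline{G}';\Q) \rightarrow \HH_2(\overline{G};\Q)$ is surjective, which forces the map $\Q \rightarrow \HH_1(G';\Q)$ to vanish. You instead stay entirely in degree $1$: you use the classical Verlagerung $\mathrm{tr}\colon \HH_1(G;\Q) \rightarrow \HH_1(G';\Q)$ and compute it explicitly on $g$, where centrality and $g \in G'$ combine to show that the coset permutation is trivial and each partial product is $t_i^{-1} g t_i = g$, giving $\mathrm{tr}([g]_G) = [G:G']\,[g]_{G'}$; since $[g]_G = 0$ and $\HH_1(G';\Q)$ is a $\Q$-vector space, the conclusion follows. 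The same two hypotheses enter both arguments, but at different points: in the paper they guarantee that $\Span{g}$ is an infinite cyclic normal subgroup of both $G$ and $G'$ (so that both five-term sequences have the same $\Q$ in them), while for you they make the transfer computable on the nose. What your approach buys is brevity and elementarity --- no $\HH_2$, no five-term sequence, no case distinction on the order of $g$ --- and it even gives the slightly stronger integral statement that $[g]$ is torsion in $\HH_1(G';\Z)$ whenever it is torsion in $\HH_1(G;\Z)$; what the paper's approach buys is that it only uses the transfer as a black box (surjectivity of corestriction rationally) rather than its explicit cochain-level formula.
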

\begin{proof}
We can assume that $g$ has infinite order, so $\Span{g} \cong \Z$.  Since $g$ is central, we have $\Span{g} \lhd G$.
Define $\overline{G} = G / \Span{g}$ and $\overline{G}' = G' / \Span{g}$.  We have
a commutative diagram of central extensions
\[\begin{CD}
1 @>>> \Z          @>>> G'   @>>> \overline{G}' @>>> 1  \\
@.     @VV{\cong}V      @VVV      @VVV               @. \\
1 @>>> \Z          @>>> G    @>>> \overline{G}  @>>> 1
\end{CD}\]
There is an associated map of 5-term exact sequences in rational
group homology (see \cite[Corollary VII.6.4]{BrownCohomology}), which takes the form
\[\begin{CD}
\HH_2(G';\Q) @>>> \HH_2(\overline{G}';\Q) @>{j'}>> \Q          @>{i'}>> \HH_1(G';\Q) @>>> \HH_1(\overline{G}';\Q) @>>> 0  \\
@VVV              @VV{\phi}V                       @VV{\cong}V          @VVV              @VVV                         @. \\
\HH_2(G;\Q)  @>>> \HH_2(\overline{G};\Q)  @>{j}>>  \Q          @>{i}>>  \HH_1(G;\Q)  @>>> \HH_1(\overline{G};\Q)  @>>> 0  \\
\end{CD}\]
By assumption, the map $i$ in this diagram is the $0$ map, so $j$ is surjective.  Since $\overline{G}'$ is a finite
index subgroup of $\overline{G}$, we can make use of the transfer
map in group homology (see \cite[Chapter III.9]{BrownCohomology}) to deduce that $\phi$ is surjective.  We
conclude that $j'$ is surjective, so $i' = 0$, and the lemma follows.
\end{proof}

\begin{proof}[{Proof of Lemma \ref{lemma:septwistsvanish}}]
Since $S$ has genus at least $3$, we can find a subsurface $S'$ of $S$ such that $S' \cong \Sigma_2^2$ and
such that $\delta$ is a boundary component of $S'$.  Since $\delta$ separates $\Sigma_g^n$, the other boundary component
of $S'$ also separates $\Sigma_g^n$.  Setting
$\Gamma' = \Gamma \cap \TorelliRelHat{S'}{\Sigma_g^n}$, it is enough to show that $[T_{\delta}]_{\Gamma'} = 0$.
Lemma \ref{lemma:septwistsvanishweak} implies that $[T_{\delta}]_{\TorelliRelHat{S'}{\Sigma_g^n}} = 0$.
Since $T_{\delta}$ is central in $\TorelliRelHat{S'}{\Sigma_g^n}$ and $\Gamma'$ is a finite-index subgroup of $\TorelliRelHat{S'}{\Sigma_g^n}$,
Lemma \ref{lemma:finiteindexvanish} implies that $[T_{\delta}]_{\TorelliRel{\Sigma}{S'}} = 0$, as desired.
\end{proof}


\noindent
{\raggedright
Andrew Putman\\
Department of Mathematics\\
Rice University, MS 136 \\
6100 Main St.\\
Houston, TX 77005\\
E-mail: {\tt andyp@rice.edu}}


\begin{thebibliography}{}
\begin{footnotesize}
\setlength{\itemsep}{1pt}

\bibitem{BestvinaBuxMargalit}
M. Bestvina, K.-U. Bux\ and\ D. Margalit, The dimension of the Torelli group, J. Amer. Math. Soc. {\bf 23} (2010), no.~1, 61--105.

\bibitem{BirmanExact}
J. S. Birman,
Mapping class groups and their relationship to braid groups,
Comm. Pure Appl. Math. {\bf 22} (1969), 213--238.

\bibitem{BirmanSiegel}
J. S. Birman,
On Siegel's modular group,
Math. Ann. {\bf 191} (1971), 59--68.

\bibitem{Boggi}
M. Boggi,
Fundamental groups of moduli stacks of stable curves of compact type,
Geom. Topol. {\bf 13} (2009), 247--276.

\bibitem{BrownCohomology}
K. S. Brown, {\it Cohomology of groups}, corrected reprint of the 1982 original, Graduate Texts in Mathematics, 87, Springer, New York, 1994. 

\bibitem{ChurchJohnson}
T. Church,
Orbits of curves under the Johnson kernel,
preprint.

\bibitem{Dehn}
M. Dehn,
{\it Papers on group theory and topology},
Translated from the German and with introductions and an appendix by John Stillwell,
Springer, New York, 1987.

\bibitem{DimcaHainPapadima}
A. Dimca, R. Hain\ and\ S. Papadima, 
The abelianization of the Johnson kernel, 
to appear in JEMS.

\bibitem{FarbMargalitPrimer}
B. Farb\ and\ D. Margalit, {\it A primer on mapping class groups}, Princeton Mathematical Series, 49, Princeton Univ. Press, Princeton, NJ, 2012. 

\bibitem{FarbLeiningerMargalit}
B. Farb, C. J. Leininger\ and\ D. Margalit,
The lower central series and pseudo-Anosov dilatations,
Amer. J. Math. {\bf 130} (2008), no.~3, 799--827.

\bibitem{GaroufalidisLevine}
S. Garoufalidis\ and\ J. Levine, Finite type $3$-manifold invariants and the structure of the Torelli group. I, Invent. Math. {\bf 131} (1998), no.~3, 541--594.

\bibitem{HainTorelli}
R. M. Hain,
Torelli groups and geometry of moduli spaces of curves,
in {\it Current topics in complex algebraic geometry (Berkeley, CA, 1992/93)}, 97--143,
Cambridge Univ. Press, Cambridge.

\bibitem{HatcherMargalit}
A. Hatcher\ and\ D. Margalit, Generating the Torelli group, Enseign. Math. (2) {\bf 58} (2012), no.~1-2, 165--188.

\bibitem{IvanovProblems}
N. V. Ivanov,
Fifteen problems about the mapping class groups,
in {\it Problems on mapping class groups and related topics}, 71--80,
Proc. Sympos. Pure Math., 74, Amer. Math. Soc., Providence, RI.

\bibitem{JohnsonConj}
D. Johnson,
Conjugacy relations in subgroups of the mapping class group and a group-theoretic description of the Rochlin invariant,
Math. Ann. {\bf 249} (1980), no.~3, 243--263.

\bibitem{JohnsonHomo}
D. Johnson,
An abelian quotient of the mapping class group ${\cal I}\sb{g}$,
Math. Ann. {\bf 249} (1980), no.~3, 225--242.

\bibitem{JohnsonSurvey}
D. Johnson,
A survey of the Torelli group,
in {\it Low-dimensional topology (San Francisco, Calif., 1981)}, 165--179,
Contemp. Math., 20, Amer. Math. Soc., Providence, RI.

\bibitem{JohnsonFinite}
D. Johnson,
The structure of the Torelli group. I. A finite set of generators for ${\cal I}$,
Ann. of Math. (2) {\bf 118} (1983), no.~3, 423--442.

\bibitem{JohnsonKg}
D. Johnson,
The structure of the Torelli group. II.
A characterization of the group generated by twists on bounding curves,
Topology {\bf 24} (1985), no.~2, 113--126.

\bibitem{JohnsonAbel}
D. Johnson,
The structure of the Torelli group. III. The abelianization of ${\mathcal T}$,
Topology {\bf 24} (1985), no.~2, 127--144.

\bibitem{MessTorelli}
G. Mess,
The Torelli groups for genus $2$ and $3$ surfaces,
Topology {\bf 31} (1992), no.~4, 775--790.

\bibitem{MatsumotoParkCity}
M. Matsumoto,
Arithmetic Mapping Class Groups,
to appear in Park City Mathematics Series.

\bibitem{ParisRolfsen}
L. Paris\ and\ D. Rolfsen,
Geometric subgroups of mapping class groups,
J. Reine Angew. Math. {\bf 521} (2000), 47--83.

\bibitem{PowellTorelli}
J. Powell,
Two theorems on the mapping class group of a surface,
Proc. Amer. Math. Soc. {\bf 68} (1978), no.~3, 347--350.

\bibitem{PutmanCutPaste}
A. Putman,
Cutting and pasting in the Torelli group,
Geom. Topol. {\bf 11} (2007), 829--865.

\bibitem{PutmanFiniteIndexNote}
A. Putman, A note on the abelianizations of finite-index subgroups of the mapping class group, Proc. Amer. Math. Soc. {\bf 138} (2010), no.~2, 753--758.

\bibitem{PutmanSmallGenset}
A. Putman, Small generating sets for the Torelli group, Geom. Topol. {\bf 16} (2012), no.~1, 111--125.

\bibitem{PutmanParkCity}
A. Putman,
The Torelli group and congruence subgroups of the mapping class group,
to appear in Park City Mathematics Series.

\end{footnotesize}
\end{thebibliography}
\end{document}